\newtheorem{theorem}{Theorem}[section]
\newtheorem{lemma}[theorem]{Lemma}
\newtheorem{proposition}[theorem]{Proposition}
\newtheorem{corollary}[theorem]{Corollary}
\theoremstyle{definition}
\newtheorem{definition}[theorem]{Definition}
\theoremstyle{remark}
\newtheorem{remark}[theorem]{Remark}
\numberwithin{equation}{section}
\newcommand{\med}{{\rm med}}
\begin{document}

\title[Maximal inequalities in quantum probability spaces]{Maximal inequalities in quantum probability spaces}

\author[Gh. Sadeghi, A. Talebi, M.S. Moslehian, ]{Gh. Sadeghi $^{1,3}$, A. Talebi$^{2,3}$ \MakeLowercase{and} M. S. Moslehian$^{2,3}$}

\address{$^1$ Department of Mathematics and Computer Sciences, Hakim Sabzevari University, P.O. Box 397, Sabzevar, Iran}
\email{g.sadeghi@hsu.ac.ir}

\address{$^2$ Department of Pure Mathematics, Ferdowsi University of Mashhad, P.O. Box 1159, Mashhad 91775, Iran.}
\email{talebi.ali@mail.um.ac.ir}
\email{moslehian@um.ac.ir; moslehian@yahoo.com}

\address{$^3$ Center Of Excellence in Analysis on Algebraic Structures (CEAAS), Ferdowsi University of Mashhad, Iran}

\subjclass[2010]{Primary 46L53; Secondary 46L10, 47A30, 60B11.}
\keywords{Noncommutative L\'{e}vy inequality; quantum probability space; weakly full independence; symmetrization; maximal inequality.}

\begin{abstract}
We employ some techniques involving projections in a von Neumann algebra to establish some maximal inequalities such as the strong and weak symmetrization, L\'{e}vy, L\'{e}vy--Skorohod, and Ottaviani inequalities in the realm of quantum probability spaces. 
\end{abstract}

\maketitle

\section{Introduction and preliminaries}

Some of the important kinds of probability inequalities such as Kolmogorov, L\'{e}vy, and Ottaviani inequalities relate tail probabilities for the maximal partial sum of independent random variables; see \cite{LR, PIN, SZE}. By centering sums of independent random variables at corresponding medians, Paul L\'{e}vy \cite{L} obtained some maximal inequalities, which can play a similar role as Kolmogorov's inequalities; see also \cite{SUK}. In fact, he proved the probability of a maximal partial sum of random variables exceeds some given number is concerned with the probability that the last partial sum does so.

L\'{e}vy inequalities (see also \cite{SAK}) assert that if $X_1, X_2, \ldots, X_n$ are symmetric independent random variables with partial sums $S_k, k=1, 2, \ldots, n$, then
for any $\lambda > 0$
\begin{eqnarray*}
&&\mathbb{P}\left( \max_{1\leq k \leq n} \left(S_k \right) > \lambda\right) \leq 2 \mathbb{P}(S_n > \lambda),\\
\text{and}\\
&&\mathbb{P}\left( \max_{1\leq k \leq n} \left| S_k \right| > \lambda\right) \leq 2 \mathbb{P}(|S_n| > \lambda).
\end{eqnarray*}
Many classical inequalities have been extended to the quantum setting. We refer to \cite{BEK, TMS1, TMS2} and the references therein for more information.

In what follows, we give some necessary preliminaries on quantum probability spaces. Throughout this paper, we denote by $\mathfrak{M}$ a von Neumann algebra on a Hilbert space $\mathcal{H}$ with the unit element $\textbf{1}$ equipped with a normal faithful tracial state $\tau$. The elements of $\mathfrak{M}$ are called (noncommutative) random variables. We denote by $\leq$ the usual order on the self-adjoint part $\mathfrak{M}^{sa}$ of $\mathfrak{M}$. For any projection $q \in \mathfrak{M}$, we write $q^{\perp}=\textbf{1}-q$.\\
For every self-adjoint operator $x\in \mathfrak{M}$, there exists a unique spectral measure $E$ supported on the spectrum $\sigma(x)$ of $x$ such that $x = \int_{\mathbb{R}} \lambda dE$. Recall that the resolution of identity $\left(e_{\lambda}(x) \right)_{\lambda}$ for $x$ is an increasing family of projections such that for any $\lambda \in \mathbb{R}$, $e_{\lambda}(x) = E((-\infty, \lambda))$ is the spectral projection of $x$ corresponding to the interval $(-\infty, \lambda)$. Moreover, we denote by $e_B(x)$ the spectral projection $E(B)$ of $x$ corresponding to the Borel subset $B$.
If $p \geq 1$, then the noncommutative $L_p$-space $L_p(\mathfrak{M})$ is defined as the completion of $\mathfrak{M}$ with respect to the $L_p$-norm $\| x \|_p := \left(\tau \left( |x|^p \right)\right)^{\frac{1}{p}}$. Further, if $x \in \mathfrak{M}$ is a positive element, then 
\[\| x \|_p^p = \int_0^{\infty} p t^{p-1} \tau \left( e_t^\perp (x) \right) dt.\]
Further, if $x \in L_p\left(\mathfrak{M}\right)$ is self-adjoint and $t>0$, then we have the inequality
\begin{eqnarray}\label{CI}
\tau(e_{t}^{\perp}(x))\leq t^{-p}\tau(|x|^p),
\end{eqnarray}
which is known as the Chebyshev inequality in the literature (see \cite{Lu}). \\
We need the next result related to the lattice of projections.
\begin{lemma}[\cite{N}]\label{PR}
Let $p$ and $q$ be two projections of $\mathfrak{M}$. Then\\
(i) if $(p_{\lambda})_{\lambda\in\Lambda}$ is a family of projections in $\mathfrak{M}$, then $\tau\left(\vee_{\lambda\in\Lambda}p_{\lambda}\right)\leq\sum_{\lambda\in\Lambda}\tau(p_{\lambda})$.\\
(ii) if $p$ and $q$ commute, then $p\wedge q=pq$.
\end{lemma}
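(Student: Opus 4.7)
The plan is to handle the two parts separately, starting with (ii) since (i) can be proved without it.

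For (ii), I would first verify that $pq$ is a projection when $p$ and $q$ commute: it is self-adjoint since $(pq)^{*}=q^{*}p^{*}=qp=pq$, and idempotent since $(pq)^{2}=p(qp)q=p(pq)q=p^{2}q^{2}=pq$. Then I would identify $pq$ with $p\wedge q$ by comparing ranges: if $\xi=pq\eta$, then $p\xi=p^{2}q\eta=pq\eta=\xi$ and, using commutativity, $q\xi=qpq\eta=pq^{2}\eta=pq\eta=\xi$, so $\xi\in\operatorname{Range}(p)\cap\operatorname{Range}(q)$; conversely, any $\xi$ fixed by both $p$ and $q$ satisfies $pq\xi=\xi$, hence lies in $\operatorname{Range}(pq)$. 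Since $p\wedge q$ is the projection onto $\operatorname{Range}(p)\cap\operatorname{Range}(q)$, this forces $pq=p\wedge q$.

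For (i), I would first establish the two-projection identity $\tau(p\vee q)+\tau(p\wedge q)=\tau(p)+\tau(q)$, which follows from the Kaplansky parallelogram law $p\vee q-q\sim p-p\wedge q$ in the sense of Murray--von Neumann equivalence, together with the fact that a trace assigns equal values to equivalent projections. This immediately yields $\tau(p\vee q)\le \tau(p)+\tau(q)$, and a straightforward induction on $n$ extends the bound to $\tau\bigl(\bigvee_{i=1}^{n}p_{i}\bigr)\le \sum_{i=1}^{n}\tau(p_{i})$ for any finite family.

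To pass from finite to arbitrary index sets, I would invoke the normality of $\tau$. For each finite $F\subset\Lambda$, set $q_{F}=\bigvee_{\lambda\in F}p_{\lambda}$; ordered by inclusion, the net $(q_{F})$ is increasing with strong-operator limit $\bigvee_{\lambda\in\Lambda}p_{\lambda}$. Normality of $\tau$ gives $\tau(q_{F})\nearrow \tau\bigl(\bigvee_{\lambda\in\Lambda}p_{\lambda}\bigr)$, and the finite case already shows $\tau(q_{F})\le \sum_{\lambda\in F}\tau(p_{\lambda})\le \sum_{\lambda\in\Lambda}\tau(p_{\lambda})$, so the inequality passes to the limit.

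The main obstacle is the Kaplansky parallelogram law, which is the only genuinely structural ingredient; one can justify it by taking the polar decomposition of $p(\mathbf{1}-q)$ and observing that its initial and final projections are $p-p\wedge q$ and $p\vee q-q$ respectively. Everything else reduces to elementary manipulations with projections, ranges, and the tracial/normality properties of $\tau$.
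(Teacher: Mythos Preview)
Your argument is correct. Part (ii) is handled by the standard range computation, and for part (i) the Kaplansky parallelogram law $p\vee q-q\sim p-p\wedge q$ (which you correctly justify via the polar decomposition of $p(\mathbf{1}-q)$), together with the trace property on equivalent projections, gives the two-projection bound; induction and normality of $\tau$ then extend it to arbitrary families exactly as you describe.

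As for comparison with the paper: there is nothing to compare, since the paper does not supply a proof of this lemma but simply cites it from Nelson~\cite{N}. Your proof is the standard one and is entirely appropriate here; in fact it is essentially the argument one finds in the literature on projection lattices in von Neumann algebras. One small remark: your opening sentence ``starting with (ii) since (i) can be proved without it'' reads oddly, since the independence of (i) from (ii) is not a reason to treat (ii) first; you may simply wish to say that the two parts are independent and can be handled in either order.
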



Some tools in the study of classical results still work in the extension to noncommutative setup. However, some techniques must be invented to find quantum versions of classical inequalities involving maximum of random variables. Indeed, the maximum of two self-adjoint operators does not exist, in general. To do away with this problem, we shall construct some special projections to establish some noncommutative maximal inequalities.

In this paper, we intend to prove analogues of several classical maximal inequalities such as strong symmetrization, L\'{e}vy, L\'{e}vy--Skorohod, and Ottaviani inequalities in the noncommutative setup. It is noteworthy that, in quantum probability theory, various concepts of independence have been studied such as tensor independence, free independence (freeness), and Boolean independence. We use the notion of the tensor independence and a weaker notion of full independence, say weakly full independence, to obtain noncommutative counterpart of some maximal inequalities. 

\section{Noncommutative L\'{e}vy inequality}

We start our work with the noncommutative counterpart of the notion of median.

\begin{definition}
For a self-adjoint element $x \in \mathfrak{M}$, we say a real number $m$ is the \textit{median} of $x$ if the following inequalities hold:
\begin{align*}
\tau\left(e_{(-\infty, m]}(x)\right) \geq \frac{1}{2} \quad \text{and} \quad \tau\left(e_m^{\perp}(x)\right) \geq \frac{1}{2}.
\end{align*}
\end{definition}
The median of $x$ is denoted by $\med(x)$. The median of any self-adjoint element $x$ always exists. In fact, the real number $m := \sup\{ \alpha: \tau\left(e_{\alpha}^{\perp}(x)\right) \geq \frac{1}{2}\}$ is a median of $x$. To observe this, let $\Sigma:= \{ \alpha: \tau\left(e_{\alpha}^{\perp}(x)\right) \geq \frac{1}{2}\}$, which is nonempty, since $-\|x\| \in \Sigma$.
Furthermore, $\Sigma$ is bounded above, because if $\alpha > \| x \|$, then $e_{\alpha}^{\perp}(x) = 0$, and hence $\alpha \notin \Sigma$.
 Consider a decreasing sequence $(\alpha_n)$ of real numbers converging to $m$. Then $(-\infty, m] = \bigcap\limits_{n=1}^{\infty} (-\infty, \alpha_n)$.
On the other hand, there exists an increasing sequence $(\beta_n)$ in $\Sigma$ such that $\beta_n \nearrow m$. Hence, $ [m, \infty) = \bigcap\limits_{n=1}^{\infty}[\beta_n, \infty)$.
Assume that $e(x)$ is the spectral measure for $x$ and let $\xi \in \mathcal{H}$. Since the function $e^{\xi, \xi}(x): B \mapsto \langle e_B(x)\xi, \xi \rangle$ is a regular Borel measure, we have
\begin{eqnarray*}
&& e^{\xi, \xi}((-\infty, \alpha_n)) \longrightarrow e^{\xi, \xi}((-\infty, m]) \qquad \text{and} \qquad 
e^{\xi, \xi}([\beta_n, \infty)) \longrightarrow e^{\xi, \xi}([m, \infty)) 
\end{eqnarray*}
That is
\begin{eqnarray*}
&& \langle e_{\alpha_n}(x)\xi, \xi \rangle \longrightarrow \langle e_{(-\infty, m]}(x)\xi, \xi \rangle \quad \text{and} \quad \langle e_{\beta_n}^{\perp}(x) \xi, \xi \rangle \longrightarrow \langle e_m^{\perp}(x)\xi, \xi \rangle 
\end{eqnarray*}
and hence in the strong operator topology
\begin{eqnarray*}
e_{\alpha_n}(x) \longrightarrow e_{(-\infty, m]}(x) \quad \text{and} \quad
e_{\beta_n}^{\perp}(x) \longrightarrow e_m^{\perp}(x)
\end{eqnarray*}
Thus
\begin{eqnarray*}
\tau(e_{(-\infty, m]}(x)) = \lim_{n \rightarrow \infty}\tau\left( e_{\alpha_n}(x) \right) =
1 - \lim_{n \rightarrow \infty}\tau\left( e_{\alpha_n}^{\perp}(x) \right) \geq \frac{1}{2}
\end{eqnarray*}
and
\begin{eqnarray*}
\tau\left(e_m^{\perp}(x)\right) = \lim_{n \rightarrow \infty}\tau\left(e_{\beta_n}^{\perp}(x)\right) \geq \frac{1}{2}.
\end{eqnarray*}

Some properties of the median are presented in the next proposition.
\begin{proposition}
Let $x \in \mathfrak{M}^{sa}$, $p \geq 1$, and $\alpha$ be a positive real number, then
\begin{itemize}
\item[\textbf{(i)}] if $\tau\left(e_{\alpha}^{\perp}(|x|)\right) < \frac{1}{2}$, then $|\med(x)| \leq \alpha$.\\
\item[\textbf{(ii)}] $| \med(x) | \leq 2^{\frac{1}{p}}\| x \|_p$.\\
\item[\textbf{(iii)}] $|\med(x) - \tau(x)| \leq \sqrt{2\, {\rm var}(x)}$, where ${\rm var}(x) = \tau\left(x^2\right) - \tau(x)^2$.
\end{itemize}
\end{proposition}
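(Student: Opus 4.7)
The plan is to prove the three items in the order stated, since (ii) is a Chebyshev-type consequence of (i) and (iii) reduces to (ii) by centering.

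For (i), the strategy is a contradiction argument after rewriting the spectral projection of $|x|$ in terms of those of $x$. By the Borel functional calculus, because $\chi_{[\alpha,\infty)}(|t|) = \chi_{(-\infty,-\alpha]}(t) + \chi_{[\alpha,\infty)}(t)$, one obtains
\[
e_\alpha^\perp(|x|) = e_{(-\infty,-\alpha]}(x) + e_\alpha^\perp(x),
\]
and the two summands are orthogonal projections, so their traces add. The hypothesis $\tau(e_\alpha^\perp(|x|)) < \tfrac12$ therefore forces both $\tau(e_\alpha^\perp(x)) < \tfrac12$ and $\tau(e_{(-\infty,-\alpha]}(x)) < \tfrac12$. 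If $m := \med(x)$ satisfied $m > \alpha$, then monotonicity of the spectral measure gives $e_\alpha^\perp(x) \geq e_m^\perp(x)$, hence $\tau(e_\alpha^\perp(x)) \geq \tfrac12$, a contradiction; symmetrically, $m < -\alpha$ forces $e_{(-\infty,m]}(x) \leq e_{(-\infty,-\alpha]}(x)$ and the same contradiction. Thus $|m| \leq \alpha$.

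For (ii), assume $x \neq 0$ (otherwise trivial) and fix any $\alpha > 2^{1/p}\|x\|_p$. Applying the Chebyshev inequality \eqref{CI} to the positive self-adjoint element $|x|$ with exponent $p$ yields
\[
\tau(e_\alpha^\perp(|x|)) \leq \alpha^{-p}\tau(|x|^p) = \frac{\|x\|_p^p}{\alpha^p} < \tfrac12,
\]
so part (i) gives $|\med(x)| \leq \alpha$. Letting $\alpha \searrow 2^{1/p}\|x\|_p$ closes this step.

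For (iii), I would center: set $y := x - \tau(x)\textbf{1}$. Because the spectral resolution shifts under translation, one checks directly from the definition that $\med(y) = \med(x) - \tau(x)$; since $\tau(y) = 0$, one also has $\|y\|_2^2 = \tau(y^2) = \tau(x^2) - \tau(x)^2 = {\rm var}(x)$. Applying (ii) with $p=2$ to $y$ gives the bound
\[
|\med(x) - \tau(x)| = |\med(y)| \leq \sqrt{2}\,\|y\|_2 = \sqrt{2\,{\rm var}(x)}.
\]
The only real obstacle is the additive spectral identity in (i); once that decomposition is justified, the remaining arguments are routine monotonicity and translation invariance.
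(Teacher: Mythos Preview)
Your proof is correct and follows essentially the same approach as the paper: the same spectral decomposition $e_\alpha^\perp(|x|)=e_\alpha^\perp(x)+e_\alpha^\perp(-x)$ (your $e_{(-\infty,-\alpha]}(x)$ is exactly $e_\alpha^\perp(-x)$) and contradiction argument for (i), Chebyshev plus (i) for (ii), and centering at $\tau(x)$ with $p=2$ for (iii). Your limiting argument in (ii) is in fact slightly more careful than the paper's, which applies (i) directly at $\alpha=2^{1/p}\|x\|_p$ where Chebyshev only yields $\tau(e_\alpha^\perp(|x|))\leq\tfrac12$ rather than the strict inequality that (i) formally requires.
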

\begin{proof}
\textbf{(i)} From $e_{\alpha}^{\perp}(|x|) = e_{\alpha}^{\perp}(x) + e_{\alpha}^{\perp}(-x)$ and the assumption we have $\tau(e_{\alpha}^{\perp}(x)) < \frac{1}{2}$ and $\tau(e_{\alpha}^{\perp}(-x)) < \frac{1}{2}$. If $\med(x) > \alpha$, then we get $\tau(e_m^{\perp}(x)) \leq \tau(e_{\alpha}^{\perp}(x)) < \frac{1}{2}$, which is impossible; hence $\med(x) \leq \alpha$. Similarly, $\med(x) \geq -\alpha$.\\
\textbf{(ii)} The conclusion can be deduced from (i) and the Chebyshev inequality \eqref{CI}
\begin{align*}
\tau\left(e_{2^{\frac{1}{p}}\| x \|_p}^{\perp}(|x|)\right) \leq \frac{\|x\|_p^p}{(2^{\frac{1}{p}}\| x \|_p)^p} = \frac{1}{2}.
\end{align*}
\textbf{(iii)} It is enough to use \textbf{(ii)} with $x - \tau(x)$ instead of $x$, and the fact that $||\, x - \tau(x)\, \|_2^2 = {\rm var}(x)$.
\end{proof}

In quantum probability theory, there is no single notion of independence; cf. \cite{Fr1, Mur}. Among noncommutative meanings of independence, freeness \cite{VDN}, which was introduced by Voiculescu and led to the development of free probability theory; tensor independence \cite{D, JX2} which is a straightforward generalization of the notion in classical probability theory and gives us a way to compute mixed moments from the moments of the summands; Boolean independence \cite{Liu}, which is related to full free product of algebras \cite{CF}; and monotone independence \cite{Mur}, which have been considered as the most fundamental one. Franz \cite{Fr3} studied some relations between freeness, monotone independence and boolean independence via B\.ozejko and Speicher’s two-state free products \cite{BS}.

To establish our main result, we can assume an independence condition as defined in \cite{TMS1}, which is weaker than the tensor independence. A sequence $(x_k)_{k=1}^n$ is said to be weakly fully independent if the subalgebras $W^*(x_1, \ldots, x_{j-1})$ and $W^*(x_j, \ldots, x_n)$ are independent for any $1 < j \leq n$, in the sense that
\begin{equation*}
\tau(ab) = \tau(a)\tau(b), \qquad a \in W^*(x_1, \ldots, x_{j-1}), b \in W^*(x_j, \ldots, x_n)
\end{equation*}
in which $W^*(S)$ denotes the $W^*$-algebra generated by a subset $S$ of $\mathfrak{M}$.

The next theorem provides a noncommutative analogue of the classical maximal L\'{e}vy inequality. 
Recall that two normal random variables $x, y$ are identically distributed if $\tau \left( e_{B}(x) \right) = \tau \left( e_{B}(y) \right)$ for any complex Borel set $B$. 
A random variable $x \in \mathfrak{M}^{sa}$ is said to be symmetric if $x$ and $-x$ are identically distributed. 

The following Lemma shows that any finite sum of weakly fully independent symmetric random variables in $\mathfrak{M}^{sa}$ is symmetric.


\begin{lemma}\label{Lem1}
If $x, y \in \mathfrak{M}^{sa}$ are two weakly fully independent symmetric random variables, then $x + y$ is symmetric.
\end{lemma}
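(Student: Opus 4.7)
The plan is to prove $x+y$ is symmetric by showing that its spectral distribution with respect to $\tau$ is symmetric about zero. Since $x+y \in \mathfrak{M}^{sa}$ is bounded, its distribution is determined by its moments $\tau((x+y)^n)$, and symmetry is equivalent to $\tau((x+y)^n) = (-1)^n \tau((x+y)^n)$ for all $n$; the even case is trivial, so the task reduces to showing $\tau((x+y)^n) = 0$ for every odd $n$.

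First I would expand $(x+y)^n = \sum_{w \in \{x,y\}^n} M_w$, where $M_w = z_{w_1} z_{w_2} \cdots z_{w_n}$ with $z_x = x$ and $z_y = y$. Trace cyclicity allows reading $\tau(M_w)$ on any cyclic permutation of $w$, so each mixed word (one containing both $x$ and $y$) can be recast in the canonical shape $x^{a_1} y^{b_1} \cdots x^{a_k} y^{b_k}$ with $a_i, b_j \geq 1$ and $\sum a_i + \sum b_j = n$. The pure words vanish immediately: $\tau(x^n) = \tau(y^n) = 0$ for odd $n$, by symmetry of $x$ and $y$ (which makes all odd moments of each vanish).

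For mixed words with a single alternation ($k=1$, shape $x^a y^b$), weak full independence gives $\tau(x^a y^b) = \tau(x^a)\tau(y^b)$; since $a + b = n$ is odd, one of $a, b$ is odd and the corresponding factor vanishes. The crux is the case $k \geq 2$. Here I would exploit symmetry more globally: the Borel functional calculus on the abelian algebra $W^*(x)$ provides a state-preserving $*$-automorphism $\alpha_x$ of $W^*(x)$ sending $x \mapsto -x$ (it exists because the spectral distribution of $x$ is symmetric about $0$), and weak full independence of $W^*(x)$ from $W^*(y)$ allows this to extend to a state-preserving $*$-automorphism of $W^*(x,y)$ that fixes $W^*(y)$ pointwise. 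Applying this extension to $M_w$ yields $\tau(M_w) = (-1)^{\deg_x M_w}\tau(M_w)$, forcing $\tau(M_w) = 0$ whenever $\deg_x M_w$ is odd; the symmetric argument using $y \mapsto -y$ covers the case $\deg_y M_w$ odd. Since $n = \deg_x M_w + \deg_y M_w$ is odd, at least one of these degrees is odd, so $\tau(M_w) = 0$ for every monomial and the odd moments of $x+y$ vanish.

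The principal obstacle I anticipate is rigorously extending $\alpha_x$ from $W^*(x)$ to a state-preserving $*$-automorphism of $W^*(x,y)$ under only weak full independence rather than tensor independence. In the tensor-independent case $W^*(x,y) \cong W^*(x) \, \overline{\otimes} \, W^*(y)$ and one takes $\alpha_x \otimes \mathrm{id}$, but under only weak independence state-preservation is not automatic on alternating words. The plan to finish this is an inductive check: reduce $\tau$ of an alternating word in $W^*(x,y)$ to shorter words using cyclicity and the factorization $\tau(ab) = \tau(a)\tau(b)$ for $a \in W^*(x)$, $b \in W^*(y)$, and verify that $\alpha_x$ preserves each such reduction. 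If this extension step is handled, the rest of the argument is essentially bookkeeping in the expansion of $(x+y)^n$.
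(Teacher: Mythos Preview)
Your route differs from the paper's. The paper does not expand over all words or build any automorphism; it argues in one display that
\[
\tau\bigl((x+y)^k\bigr)=\sum_{i=0}^k\binom{k}{i}\tau(x^iy^{k-i})=\sum_{i=0}^k\binom{k}{i}\tau(x^i)\tau(y^{k-i})=\sum_{i=0}^k\binom{k}{i}\tau((-x)^i)\tau((-y)^{k-i})=\tau\bigl((-x-y)^k\bigr),
\]
citing ``the binomial identity and the tracial property of $\tau$'' for the first equality, weak full independence for the second, symmetry of $x$ and of $y$ separately for the third, and then the first two steps in reverse. It never isolates individual alternating monomials or invokes any automorphism of $W^*(x,y)$.

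Your more careful expansion over all words is the right instinct, but the gap you flag is real and your proposed patch does not close it. Cyclicity together with the single factorization rule $\tau(ab)=\tau(a)\tau(b)$ for $a\in W^*(x)$, $b\in W^*(y)$ cannot reduce a word with two or more $x$/$y$ alternations: $x^{a_1}y^{b_1}x^{a_2}y^{b_2}$ is never of the form $ab$ with $a\in W^*(x)$ and $b\in W^*(y)$, and cyclic rotation cannot lower the alternation count below~$2$. Concretely, take $x=\sigma_3$ and $y=\sigma_1$ in $M_2(\mathbb C)$ with the normalized trace: both are symmetric and one checks they are weakly fully independent, yet $\tau(xyxy)=-1\neq 1=\tau(x^2y^2)$. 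Thus $\tau$ on length-$4$ alternating words is \emph{not} determined by the data your induction has access to, and a state-preserving extension of $\alpha_x$ to $W^*(x,y)$ cannot be manufactured from weak full independence alone. (The same example shows that the paper's first equality above is itself unjustified for $k\ge 4$: the obstacle you isolate is precisely what the paper's shortcut glosses over. A clean proof seems to require either tensor independence or commutation of $x$ with $y$, under which the binomial expansion is legitimate.)
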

\begin{proof}
By the binomial identity and the tracial property of $\tau$, we get for every natural number $k$
\begin{eqnarray*}
\tau \left( (x+ y)^k \right) & = & \sum_{i= 0}^k \binom{k}{i} \tau \left( x^i\, y^{k-i} \right) \\
& = & \sum_{i= 0}^k \binom{k}{i} \tau \left( x^i \right) \tau \left( y^{k-i} \right) \quad\qquad\quad ~~ (\text{by weakly fully independence}) \\
& = & \sum_{i= 0}^k \binom{k}{i} \tau \left( (-x)^i \right) \tau \left( (-y)^{k-i} \right) \quad (\text{by symmetricity of $x, y$}) \\
& = & \sum_{i= 0}^k \binom{k}{i} \tau \left( (-x)^i\, (-y)^{k-i} \right) \quad\quad ~~ (\text{by weakly fully independence}) \\
& = & \tau \left( (-x - y)^k \right),
\end{eqnarray*}
and hence the probability distribution of $x + y$ and $-x - y$ coincide by \cite[Page 203, Remark]{D}.
\end{proof}
\begin{theorem}[Noncommutative L\'{e}vy inequality]\label{th1}
Let $x_1, x_2, \ldots x_n$ be weakly fully independent symmetric random variables in $\mathfrak{M}^{sa}$ with the partial sums $s_k$ satisfying $s_ks_n = s_ns_k$ for all $1\leq k\leq n$. Then, for each  $\lambda>0$, there exist two projections $p$ and $q$ such that
\begin{eqnarray}
&&  \max_{1 \leq k \leq n} \frac{1}{2^{k-1}} \tau\left( e_{(\lambda, \infty)}\left(s_k \right) \right) ~ \leq ~ \tau(p) \leq ~  2 \tau \left( e_{(\lambda, \infty)}\left( s_n \right) \right),\label{in1-1}
\end{eqnarray}
and
\begin{eqnarray}
\max_{1 \leq k \leq n} \frac{1}{2^{k-1}} \tau\left( e_{(\lambda, \infty)}\left( |s_k| \right) \right) ~ \leq ~ \tau(p + q) \leq 2 \tau \left( e_{(\lambda, \infty)}\left( | s_n | \right) \right). \label{in1-2}
\end{eqnarray}
Then, if $e_{(\lambda, \infty)}\left(s_k \right)$ (respectively, $e_{(\lambda, \infty)}\left(-s_k \right)$) is nonzero for some $1 \leq k \leq n$, then $p$ (respectively $q$) is a nonzero projection. 
\end{theorem}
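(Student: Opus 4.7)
Set $q_k := e_{(\lambda, \infty)}(s_k)$, $t_k := s_n - s_k$, $r_k := e_{[0, \infty)}(t_k)$, and $\tilde q_k := e_{(\lambda, \infty)}(2s_k - s_n)$. Three ingredients drive the proof. First, iterating Lemma~\ref{Lem1} shows that the tail sum $t_k = x_{k+1} + \cdots + x_n$ is symmetric, so $\tau(r_k) \geq \tfrac{1}{2}$. Second, the hypothesis $s_k s_n = s_n s_k$ gives $[s_k, t_k] = 0$, so $s_k$ and $t_k$ are jointly diagonalizable and the projections $q_k$, $r_k$, $q_n$ pairwise commute. Third, weakly full independence applied to $q_k \in W^*(x_1, \ldots, x_k)$ and $r_k \in W^*(x_{k+1}, \ldots, x_n)$ factorises traces: $\tau(q_k r_k) = \tau(q_k)\tau(r_k) \geq \tfrac{1}{2}\tau(q_k)$.

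The core step is the noncommutative ``reflection'' inequality $q_k \leq q_n \vee \tilde q_k$. Writing $q_k = q_k r_k + q_k r_k^{\perp}$, joint diagonalisation shows $q_k r_k \leq q_n$ (on the range of that projection, $s_n = s_k + t_k > \lambda$) and $q_k r_k^{\perp} \leq \tilde q_k$ (there $-t_k > 0$, so $2s_k - s_n = s_k - t_k > \lambda$). A moment calculation next shows that $2s_k - s_n$ and $s_n$ are identically distributed: binomially expand $(s_k \pm t_k)^r$ (legitimate by the commutation), factor each mixed moment via weakly full independence as $\tau(s_k^i t_k^j) = \tau(s_k^i)\tau(t_k^j)$, and use symmetry of $t_k$ to cancel the odd-$j$ terms; then invoke the criterion of \cite[Page 203, Remark]{D} as in Lemma~\ref{Lem1}. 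In particular $\tau(\tilde q_k) = \tau(q_n)$, and taking traces in the reflection inequality yields the pointwise bound $\tau(q_k) \leq \tau(q_n) + \tau(\tilde q_k) = 2\tau(q_n)$.

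To conclude, take $p := q_n \vee \tilde q_1$. Lemma~\ref{PR}(i) combined with $\tau(\tilde q_1) = \tau(q_n)$ gives $\tau(p) \leq 2 \tau(q_n)$, which is the upper bound in \eqref{in1-1}. For the lower bound in \eqref{in1-1}: at $k = 1$, the reflection inequality gives $q_1 \leq p$, so $\tau(p) \geq \tau(q_1)$; for $k \geq 2$, the pointwise bound $\tau(q_k) \leq 2\tau(q_n) \leq 2 \tau(p)$ yields $\tfrac{1}{2^{k-1}} \tau(q_k) \leq \tfrac{1}{2^{k-2}} \tau(q_n) \leq \tau(p)$. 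Repeating the whole construction with $-s_k$ in place of $s_k$ (still symmetric, still weakly fully independent, and $(-s_k)(-s_n) = (-s_n)(-s_k)$) produces a projection $q$ satisfying the analogous bounds involving $q_k' := e_{(\lambda,\infty)}(-s_k)$. Since $e_{(\lambda,\infty)}(|s_k|) = q_k + q_k'$ is an orthogonal sum of spectral projections of the self-adjoint $s_k$, adding the two estimates gives \eqref{in1-2}. The nonzero assertions then follow from faithfulness of $\tau$: if $q_{k_0} \neq 0$ then $\tau(q_{k_0}) > 0$, so the lower bound forces $\tau(p) > 0$, hence $p \neq 0$, and similarly for $q$.

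The main obstacle will be the noncommutative distributional identity $\tau((2s_k - s_n)^r) = \tau(s_n^r)$: although commutation of $s_k$ and $t_k$ legitimises the binomial expansion, the two operators do not sit in commuting subalgebras in general, so weakly full independence must be invoked separately on every mixed moment, and the symmetry of $t_k$ must be used to kill all odd-parity terms simultaneously. Once that identity is in hand, the rest of the argument is a short assembly of Lemma~\ref{PR}(i), the reflection inequality, and the factorisation of traces.
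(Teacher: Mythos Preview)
Your argument is correct but follows a genuinely different route from the paper's. The paper builds $p$ as a \emph{stopping-time} decomposition: with $r_k=e_{(-\infty,\lambda]}(s_k)$ it sets $p_k=\bigwedge_{i<k}r_i\wedge r_k^\perp$ (the ``first index where $s_k>\lambda$'' projection) and $p=\sum_k p_k$. The upper bound comes from $p_k\,e_{[0,\infty)}(s_n-s_k)\,p_k\le p_k\,e_{(\lambda,\infty)}(s_n)\,p_k$, factorising $\tau(p_k\,e_{[0,\infty)}(s_n-s_k))$ by weakly full independence, and using symmetry of $s_n-s_k$ only through $\tau(e_{[0,\infty)}(s_n-s_k))\ge\tfrac12$. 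The lower bound is obtained by a separate strong induction on $k$ showing $\tau(p^\perp)\le 1-2^{-(k-1)}\tau(r_k^\perp)$.

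Your approach is a noncommutative \emph{reflection principle}: you exploit symmetry much more strongly, proving via moments that $2s_k-s_n$ and $s_n$ are identically distributed, so that $\tau(\tilde q_k)=\tau(q_n)$; the reflection inequality $q_k\le q_n\vee\tilde q_k$ then immediately gives both $\tau(q_k)\le 2\tau(q_n)$ for every $k$ and a concrete choice $p=q_n\vee\tilde q_1$ with the right upper bound. Your lower bound is thus essentially free (no induction), whereas the paper's construction needs the inductive step. On the other hand, the paper's $p$ coincides in the commutative case with the characteristic function of $\{\max_k S_k>\lambda\}$ and hence recovers the classical L\'evy inequality verbatim; your $p$ corresponds to $\{S_n>\lambda\}\cup\{2S_1-S_n>\lambda\}$, which still satisfies the theorem as stated but does not specialise to the classical maximal event. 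The paper's stopping-time template also transfers directly to the Ottaviani and L\'evy--Skorohod inequalities later in the section, where no symmetry is assumed and your distributional identity $\tau((2s_k-s_n)^r)=\tau(s_n^r)$ is unavailable.
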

\begin{proof}
The proof is based on some constructions of special sequence of projections relative to $(s_k)$ and a parameter $\lambda$. More precisely, consider projections $r_k = e_{(-\infty, \lambda]}\left( s_k \right)$ with $1 \leq k \leq n$. We set
\begin{eqnarray}
&& p_1 := r_1^{\perp}, \quad p_k := \bigwedge_{i=1}^{k-1} r_{i} \wedge r_k^{\perp} \in W^*\left(x_1, x_2, \ldots, x_k \right) \quad (1 < k \leq n) \label{in5-1};\nonumber\\
&&t_k := e_{[0, \infty)}\left( s_n - s_k \right) \label{in5-2};\\
&&f_k := r_k^{\perp} \wedge t_k = r_k^{\perp} t_k \label{in5-3}\nonumber,
\end{eqnarray}
for each $1 \leq k \leq n$.
Actually, $ (p_k)_k$ is a sequence of orthogonal projections. 
Since $s_k$ and $s_n$ commute, considering the abelian von Neumann algebra $W^*(s_k, s_n)$ generated by $s_k$ and $s_n$, one can deduce that
\begin{eqnarray}\label{in7}
f_k = r_k^{\perp} t_k  \leq e_{(\lambda, \infty)}\left( s_n \right)
\end{eqnarray}
for all $k$.
Note that $p_kf_k = p_kt_k$, since $p_k \leq r_k^{\perp}$. From \eqref{in7}, multiplying by $p_k$, we have
\begin{eqnarray}\label{in6}
p_kt_kp_k & = & p_k r_k^{\perp}t_k p_k \leq p_k\, e_{(\lambda, \infty)}\left( s_n \right)\, p_k \nonumber \\
& \Longrightarrow & \tau \left( p_k t_k \right) \leq \tau \left( p_k e_{(\lambda, \infty)}\left( s_n \right) \right) \nonumber \\
& \Longrightarrow &
\sum_{k=1}^n \tau \left( p_k t_k \right) \leq \sum_{k=1}^n \tau \left( p_k\, e_{(\lambda, \infty)}\left( s_n \right) \right)
\leq \tau \left( e_{(\lambda, \infty)}\left( s_n \right) \right).
\end{eqnarray}
Putting $p := \sum_{k=1}^n p_k$, the right hand side of inequality \eqref{in1-1} is deduced from
\begin{eqnarray}\label{in2}
\tau \left( e_{(\lambda, \infty)}\left( s_n \right) \right) &\geq & \sum_{k=1}^n \tau(p_k\, t_k) \quad \qquad (\text{by \eqref{in6}}) \nonumber \\
&=& \sum_{k=1}^n \tau(p_k)\tau(t_k) \quad \left(\text{by the weakly full independence}\right) \nonumber\\
&\geq & \frac{1}{2}\tau(p)
\end{eqnarray}
in which the last inequality can be obtained from \eqref{in5-2}, Lemma \ref{Lem1} which implies that $\med\left( s_k - s_n \right) = 0$, and the definition of the median.

By utilizing the same argument with $-s_k$ instead of $s_k$, we get a projection $q$ such that
\begin{eqnarray}\label{in3}
\tau \left( e_{(\lambda, \infty)}\left( -s_n \right) \right) \geq \frac{1}{2}\tau(q).
\end{eqnarray}
Now, the right hand side of inequality \eqref{in1-2} can be obtained by summing \eqref{in2} and \eqref{in3}.

Next, in order to show the left hand side of inequlity \eqref{in1-1}, we prove by strong induction that 
\begin{equation}\label{ine3}
\tau(p^{\perp}) \leq 1 - \frac{1}{2^{k-1}}\tau \left(r_k^{\perp}\right)
\end{equation}
for every $1 \leq k \leq n$. The step $k = 1$ holds due to $\tau\left(p^{\perp} \right) \leq \tau(p_1^{\perp}) = 1 - \tau(r_1^{\perp})$.
Now, let inequality \eqref{ine3} holds for any $i <k $. Then
\begin{eqnarray*}
\tau(p^{\perp}) & \leq & \tau \left(p_k^{\perp} \right) = \tau \left( \bigvee_{i=1}^{k-1} r_{i}^{\perp} \vee r_k \right) \\
& \leq & \sum_{i=1}^{k-1} \tau \left( r_{i}^{\perp} \right) + \tau \left( r_{k} \right) \qquad (\text{by Lemma \ref{PR} \textbf{(i)}}) \\
& \leq & \left( 1 - \tau \left( p^{\perp} \right) \right) \sum_{i=1}^{k-1} 2^{i-1} + \tau \left( r_{k} \right) \quad \left(\text{by \eqref{ine3} and the inductive hypothesis}\right)\\
& = & \left( 1 - \tau \left( p^{\perp} \right) \right) \left( 2^{k-1} - 1\right) + \tau \left( r_{k} \right) 
\end{eqnarray*}
Therfore,
\begin{eqnarray*}
\tau(p^{\perp}) \leq \frac{2^{k-1} - 1 + \tau \left( r_{k} \right) }{2^{k-1}} = 1 - \frac{1}{2^{k-1}}\tau \left(r_k^{\perp}\right),
\end{eqnarray*}
and this completes the argument.\\
It follows from inequality \eqref{ine3} that $\tau (p) \geq \frac{1}{2^{k-1}} \tau \left( r_k^{\perp} \right)$ for every $1 \leq k \leq n$, and so we arrive at \eqref{in1-1}.
It can be proved by the same reasoning that $\max\limits_{1 \leq k \leq n} \frac{1}{2^{k-1}} \tau\left( e_{(\lambda, \infty)}\left( -s_k \right) \right) \leq \tau(q)$. Hence,
\begin{eqnarray*}
&&  \max_{1 \leq k \leq n} \frac{1}{2^{k-1}} \tau\left( e_{(\lambda, \infty)}\left( |s_k| \right) \right) \\
&=& \max\limits_{1 \leq k \leq n} \frac{1}{2^{k-1}} \tau\left( e_{(\lambda, \infty)}\left( s_k \right) \right) + 
\max\limits_{1 \leq k \leq n} \frac{1}{2^{k-1}} \tau\left( e_{(\lambda, \infty)}\left( -s_k \right) \right) \leq  \tau(p + q),
\end{eqnarray*}
and the proof of the left hand side of \eqref{in1-2} is complete.
\end{proof}

\begin{remark}
Notice that, for example, if $\mathfrak{M}_1, \ldots, \mathfrak{M}_n$ are some noncommutative probability spaces and $x_i \in \mathfrak{M}_i$, then $x_1, x_2, \ldots, x_n$ satisfy the commutative condition $s_ks_n = s_ns_k$ for all $k$ stated in Theorem \ref{th1} with respect to the product probability space $\bigotimes\limits_{i=1}^n \mathfrak{M}_i$, by identifying $x_i$ with $1 \otimes \ldots \otimes 1 \otimes x_i \otimes 1 \ldots \otimes 1$ ($x_i$ on the $i$-th spot). 
Furthermore, an example provided in \cite{TMS1} to show that our condition is weaker than commutativity of all $x_i$'s.
However, we present another example in the matrix algebra $\mathbb{M}_3(\mathbb{C})$ for the reader's convenience.
\begin{eqnarray*}
&& x_1 = \begin{pmatrix}
1 & 1-i & 0 \\
1+i & 3 & i \\
0 & -i & -1
\end{pmatrix},\qquad \qquad
x_2 = \begin{pmatrix}
0 & -1 & -i \\
-1 & 1 & 2i \\
i & -2i & 3 
\end{pmatrix}\\
&& x_3 = \begin{pmatrix}
3 & 2i & i+1 \\
-2i & -2 & 1 \\
1-i & 1 & 2
\end{pmatrix},
\quad \text{and}\qquad
x_4 = \begin{pmatrix}
-2 & -i & -1 \\
i & 0 & -3i-1 \\
-1 & -1+3i & -2
\end{pmatrix}.
\end{eqnarray*}
Then, not all $x_1, x_2, x_3, x_4$ commute with each other, but $s_1$, $s_2$ and $s_3$ commute with $s_4 = \begin{pmatrix}
2 & 0 & 0 \\
0 & 2 & 0 \\
0 & 0 & 2
\end{pmatrix}$
\end{remark}

Now, we present the classical version of L\'{e}vy inequality.
\begin{corollary}
Let $X_1, X_2, \ldots, X_n$ be symmetric independent random variables in probability space $(\Omega, \mathfrak{F}, \mathbb{P})$ with partial sums $S_n$. Then, for any $\lambda > 0$
\begin{eqnarray}
&&\mathbb{P}\left( \max\limits_{1\leq k \leq n} \left(S_k \right) > \lambda\right) \leq 2 \mathbb{P}(S_n > \lambda), \label{cor1}\\
\text{and}~
&&\mathbb{P}\left( \max\limits_{1\leq k \leq n} \left| S_k \right| > \lambda\right) \leq 2 \mathbb{P}(|S_n| > \lambda)\label{cor2}
\end{eqnarray}
\end{corollary}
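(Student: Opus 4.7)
The plan is to deduce the corollary by specializing Theorem \ref{th1} to the commutative von Neumann algebra $\mathfrak{M} = L^\infty(\Omega, \mathfrak{F}, \mathbb{P})$ equipped with the tracial state $\tau(f) := \mathbb{E}(f)$. Under this dictionary, every self-adjoint element of $\mathfrak{M}$ is a real-valued bounded random variable, each spectral projection $e_{(\lambda,\infty)}(X)$ coincides with the indicator $\mathbf{1}_{\{X > \lambda\}}$, and $\tau$ applied to such a projection returns the probability of the corresponding event. Classical independence of $(X_k)$ implies the weakly full independence required by Theorem \ref{th1}: for $a \in W^*(X_1, \ldots, X_{j-1})$ and $b \in W^*(X_j, \ldots, X_n)$ the elements $a, b$ are bounded random variables measurable with respect to independent $\sigma$-algebras, so their expectations factor. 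Symmetry is inherited trivially, and commutativity of $\mathfrak{M}$ makes the hypothesis $s_k s_n = s_n s_k$ automatic.

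Next, I would unpack the projections built in the proof of Theorem \ref{th1}. Writing $r_k = \mathbf{1}_{\{S_k \leq \lambda\}}$, the orthogonal projections $p_k = \bigwedge_{i < k} r_i \wedge r_k^{\perp}$ become indicators of the pairwise disjoint events $\{S_1 \leq \lambda, \ldots, S_{k-1} \leq \lambda, S_k > \lambda\}$ (the first-exit events), so that $p = \sum_{k=1}^n p_k = \mathbf{1}_{\{\max_{1 \leq k \leq n} S_k > \lambda\}}$ and $\tau(p) = \mathbb{P}(\max_k S_k > \lambda)$. The right-hand inequality of \eqref{in1-1} is then precisely \eqref{cor1}. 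Applying the same construction to $(-X_k)$ yields a projection $q = \mathbf{1}_{\{\max_k (-S_k) > \lambda\}}$ with $\tau(q) = \mathbb{P}(\max_k (-S_k) > \lambda)$. Since
\[
\{\max_{1 \leq k \leq n} |S_k| > \lambda\} = \{\max_k S_k > \lambda\} \cup \{\max_k (-S_k) > \lambda\},
\]
subadditivity of $\mathbb{P}$ gives $\mathbb{P}(\max_k |S_k| > \lambda) \leq \tau(p) + \tau(q) = \tau(p + q)$, and the right-hand side of \eqref{in1-2} then completes \eqref{cor2}.

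The one genuine obstacle is that $\mathfrak{M}^{sa} = L^\infty(\Omega, \mathfrak{F}, \mathbb{P})_{\mathbb{R}}$ only accommodates bounded random variables, whereas the classical statement imposes no integrability on the $X_k$. I would handle this by a standard truncation: for $M > 0$ the truncations $X_k^{(M)} := (X_k \wedge M) \vee (-M)$ are bounded, remain symmetric (truncation by an odd function at a symmetric threshold) and remain independent (measurable functions of independent variables), so the argument above yields \eqref{cor1} and \eqref{cor2} for the partial sums $S_k^{(M)}$. As $M \to \infty$, $S_k^{(M)} \to S_k$ pointwise, so $\mathbb{P}(\max_k S_k^{(M)} > \lambda') \to \mathbb{P}(\max_k S_k > \lambda')$ and $\mathbb{P}(S_n^{(M)} > \lambda') \to \mathbb{P}(S_n > \lambda')$ for every continuity point $\lambda'$ of the respective distributions; approximating by continuity points $\lambda' \nearrow \lambda$ transfers the inequalities to arbitrary $\lambda > 0$. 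This last passage to the limit is the only mildly technical point of the proof.
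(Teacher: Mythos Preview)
Your argument is essentially the paper's own: identify the projections $p$ and $q$ from Theorem~\ref{th1} with the indicators of $\{\max_k S_k>\lambda\}$ and $\{\max_k(-S_k)>\lambda\}$, then read off \eqref{cor1} from the right side of \eqref{in1-1} and \eqref{cor2} from the right side of \eqref{in1-2} via the union identity. The paper does exactly this, though more tersely; your verification that the hypotheses (weakly full independence, symmetry, commutation) carry over is implicit in the paper but good to spell out.

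The one substantive addition in your proposal is the truncation step, which the paper simply omits: it applies Theorem~\ref{th1} in $L^\infty(\Omega,\mathfrak{F},\mathbb{P})$ without remarking that this only covers bounded $X_k$. Your truncation-and-limit argument is the standard way to close this gap and is correct in spirit; the only slip is the direction of the approximation. Since $\lambda\mapsto\mathbb{P}(Y>\lambda)=1-F_Y(\lambda)$ is right-continuous, you want continuity points $\lambda'\searrow\lambda$ (not $\lambda'\nearrow\lambda$, which would give you $\mathbb{P}(Y\geq\lambda)$ in the limit). With that minor correction the passage to general $X_k$ goes through.
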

\begin{proof}
The projections $r_k, p_k$ in the proof of noncommutative L\'{e}vy inequality correspond to the characteristic functions of the subsets $R_k = R_{k-1} \cap \{S_k \leq \lambda\}$ and $P_k = R_{k-1}\cap \{S_k > \lambda\}$ with $R_0=\Omega$.
Note that the projections $p$ and $q$ in Theorem \ref{th1} correspond to the characteristic functions of the subsets $\{\max\limits_{1\leq k \leq n} (S_k) > \lambda\}$ and $\{\max\limits_{1\leq k \leq n} (-S_k) > \lambda\}$, respectively. Indeed, it is easy to check that
\begin{eqnarray*}
\bigcup_{k=1}^n P_k = \{\max_{1\leq k \leq n} (S_k) > \lambda\}.
\end{eqnarray*}
Thus, by Theorem \ref{th1}
\begin{eqnarray*}
\mathbb{P}\left( \max_{1\leq k \leq n} \left(S_k \right) > \lambda\right)  \leq 2\mathbb{P}(S_n > \lambda),
\end{eqnarray*}
which ensures inequality \eqref{cor1}. Inequality \eqref{cor2} follows from
\begin{eqnarray*}
\{\max_{1\leq k \leq n} (S_k) > \lambda\} \cup \{\max_{1\leq k \leq n} (-S_k) > \lambda\} = \{\max_{1\leq k \leq n} |S_k| > \lambda\}.
\end{eqnarray*}
\end{proof}

In what follows, we employ the same reasoning as in the proof of noncommutative L\'{e}vy inequality, so we omit some details.\\
Ottaviani and L\'{e}vy--Skorohod inequalities are related to the tail probabilities for maximum partial sums of some random variables which are not symmetric.
The first result of this section reads as follows.  

\begin{theorem}[Noncommutative Ottaviani inequality]\label{O}
Let $x_1, x_2, \ldots, x_n$ be weakly fully independent self-adjoint random variables in $\mathfrak{M}$ such that $s_ks_n = s_ns_k$ $(k=1, 2, \ldots, n)$, where $s_k$ are the partial sums. Then for each $\lambda>0$, setting $M_{\lambda} := \frac{1}{\min\limits_{1 \leq k\leq n} \tau\left(e_{(-\infty, \frac{\lambda}{2}]}\left(\,|s_n - s_k|\, \right) \right)}$ (with $1/0=\infty$), there exists a projection $p$ such that
\begin{eqnarray*}
\max\limits_{1 \leq k \leq n} \frac{1}{2^{k-1}} \tau \left( e_{(2\lambda, \infty)}\left(|s_k| \right) \right) ~ \leq ~ \tau(p) ~ \leq ~ M_{\lambda}\, \tau \left( e_{(\frac{\lambda}{2}, \infty)}\left( |s_n| \right) \right)
\end{eqnarray*}
\end{theorem}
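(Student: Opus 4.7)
The plan is to mimic the proof of Theorem \ref{th1}, changing the threshold level from $\lambda$ to $2\lambda$ on the left and replacing the ``half-space'' projection $e_{[0,\infty)}(s_n-s_k)$ (which in the symmetric case had trace $\geq 1/2$) by a ``small increment'' projection $t_k := e_{(-\infty, \lambda/2]}(|s_n-s_k|)$ whose trace is controlled by $M_\lambda$. Concretely, I set $r_k := e_{(-\infty, 2\lambda]}(|s_k|)$, so $r_k^\perp = e_{(2\lambda,\infty)}(|s_k|)$; define $p_1 := r_1^\perp$ and $p_k := \bigwedge_{i=1}^{k-1} r_i \wedge r_k^\perp$ for $1 < k \leq n$, and let $p := \sum_{k=1}^n p_k$. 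Exactly as in Theorem \ref{th1}, the $p_k$ are pairwise orthogonal, $p$ is a projection, each $p_k$ lies in $W^*(x_1,\dots,x_k)$, and $t_k \in W^*(x_{k+1},\dots,x_n)$.

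The left-hand bound on $\tau(p)$ follows verbatim: the strong induction establishing \eqref{ine3} uses only the combinatorial structure of the $p_k$ and Lemma \ref{PR}\textbf{(i)}, so substituting the present $r_k$ gives $\tau(p) \geq \frac{1}{2^{k-1}}\tau(r_k^\perp) = \frac{1}{2^{k-1}}\tau(e_{(2\lambda,\infty)}(|s_k|))$ for every $k$, and taking the maximum yields the desired inequality.

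For the right-hand bound the key step uses the commutativity hypothesis $s_k s_n = s_n s_k$: then $|s_k|,\,|s_n-s_k|,\,|s_n|$ all lie in the abelian algebra $W^*(s_k,s_n)$, and the pointwise triangle inequality $|s_n|\geq |s_k|-|s_n-s_k|$ (valid in this abelian setting) implies that the projection $f_k := r_k^\perp \wedge t_k = r_k^\perp t_k$ (equality by Lemma \ref{PR}\textbf{(ii)}) satisfies $f_k \leq e_{(\lambda/2,\infty)}(|s_n|)$, since on the spectral set where $|s_k|>2\lambda$ and $|s_n-s_k|\leq \lambda/2$ one has $|s_n|>3\lambda/2 > \lambda/2$. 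Using $p_k \leq r_k^\perp$ and the tracial identity $\tau(p_k t_k) = \tau(p_k t_k p_k) = \tau(p_k f_k p_k) \leq \tau\!\left(p_k\, e_{(\lambda/2,\infty)}(|s_n|)\right)$, together with $\sum_k p_k \leq \mathbf{1}$, I obtain $\sum_k \tau(p_k t_k) \leq \tau(e_{(\lambda/2,\infty)}(|s_n|))$. Weakly full independence then splits $\tau(p_k t_k) = \tau(p_k)\tau(t_k) \geq \tau(p_k)/M_\lambda$, and summing gives $\tau(p) \leq M_\lambda\, \tau(e_{(\lambda/2,\infty)}(|s_n|))$.

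The main obstacle, compared with Theorem \ref{th1}, is that the symmetry of the $x_i$ was doing double duty there: it forced $\tau(e_{[0,\infty)}(s_n-s_k))\geq 1/2$, and it made the $+s_k$ and $-s_k$ arguments symmetric so that $e_{(\lambda,\infty)}(|s_k|)$ could be controlled by two separate projections. Here neither is available; one must work directly with $|s_k|$, pay for the missing lower bound on $\tau(t_k)$ by the explicit factor $M_\lambda$, and enlarge the threshold from $\lambda$ to $2\lambda$ to absorb the $\lambda/2$ slack in the triangle inequality. The commutativity $s_k s_n = s_n s_k$ is precisely what is needed to promote that pointwise triangle inequality to an operator inequality of projections, allowing the L\'{e}vy-style counting argument to close.
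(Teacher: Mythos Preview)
Your proof is correct and follows essentially the same route as the paper's: the same projections $r_k = e_{(-\infty,2\lambda]}(|s_k|)$, $p_k = \bigwedge_{i<k} r_i \wedge r_k^{\perp}$, $t_k = e_{(-\infty,\lambda/2]}(|s_n-s_k|)$, and $f_k = r_k^{\perp}t_k$ are used, the key inequality $f_k \le e_{(\lambda/2,\infty)}(|s_n|)$ is obtained in the abelian algebra $W^*(s_k,s_n)$, and weakly full independence splits $\tau(p_k t_k)=\tau(p_k)\tau(t_k)$. You have in fact supplied details the paper leaves implicit, namely the explicit triangle-inequality justification of $f_k \le e_{(\lambda/2,\infty)}(|s_n|)$ and the observation that the left-hand bound is literally the strong induction \eqref{ine3} from Theorem~\ref{th1} with the new $r_k$'s.
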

\begin{proof}
Consider the sequence $r_k = e_{(-\infty, 2\lambda]}\left( |s_k| \right) \,\, 1 \leq k \leq n$ of projections with respect to $|s_k|$ and the parameter $2\lambda$. We set
\begin{eqnarray*}
&&p_k := \bigwedge_{i=1}^{k-1} r_{i} \wedge r_k^{\perp} \\
\text{and} \quad
&&t_k := e_{(-\infty, \frac{\lambda}{2}]}\left( |s_n - s_k| \right)
\end{eqnarray*}
for each $1 \leq k \leq n$.
Then $ (p_k)_k$ is a sequence of orthogonal projections. Setting $f_k := t_k \wedge r_k^{\perp} = t_k r_k^{\perp}$, we get
\begin{eqnarray*}\label{Ot1}
f_k \leq e_{(\frac{\lambda}{2}, \infty)}\left( |s_n| \right)
\end{eqnarray*}
and hence $p_k\, t_k\, p_k \leq p_k\, e_{(\frac{\lambda}{2}, \infty)}\left( |s_n| \right) p_k$.
Consequently, taking the trace and summing over $k$, we have
\begin{eqnarray}\label{Ot2}
\sum_{k=1}^n \tau(p_k\, t_k) \leq \sum_{k=1}^n \tau\left( p_k\, e_{(\frac{\lambda}{2}, \infty)}\left( |s_n| \right) \right) \leq \tau\left( e_{(\frac{\lambda}{2}, \infty)}\left( |s_n| \right) \right)
\end{eqnarray}
Putting $p := \sum_{k=1}^n p_k$, the desired inequality follows via
\begin{eqnarray*}
\tau \left( e_{(\frac{\lambda}{2}, \infty)}\left( |s_n| \right) \right) &\geq & \sum_{k=1}^n \tau(p_k\, t_k) \quad \qquad (\text{by \eqref{Ot2}}) \nonumber \\
&=& \sum_{k=1}^n \tau(p_k)\tau(t_k) \quad (\text{by independence}) \nonumber\\
&\geq & \min_{1 \leq k\leq n} \tau\left(t_k \right)\tau(p).
\end{eqnarray*}
\end{proof}


Now, we present a quantum version of L\'{e}vy--Skorohod inequality.

\begin{theorem}[Noncommutative L\'{e}vy--Skorohod inequality]
Let $x_1, x_2, \ldots, x_n$ be weakly fully independent self-adjoint random variables in $\mathfrak{M}$ with partial sums $s_k$ satisfying $s_ks_n = s_ns_k$ for $k=1, 2, \ldots, n$. For each $\lambda>0$ and for any $0 < \alpha <1$, if $M_{\lambda}:= \frac{1}{\min\limits_{1 \leq k\leq n} \tau\left(e_{-(1-\alpha)\lambda}^{\perp}\left(s_n - s_k \right) \right)}$ (with $1/0=\infty$), then there exists a projection $p$ such that
\begin{eqnarray}\label{LO1}
\max\limits_{1 \leq k \leq n} \frac{1}{2^{k-1}} \tau \left( e_{(\lambda, \infty)}\left(s_k \right) \right) ~ \leq ~\tau(p) ~ \leq ~ M_{\lambda}\, \tau \left( e_{(\alpha \lambda, \infty)}\left( s_n \right) \right)
\end{eqnarray}
\end{theorem}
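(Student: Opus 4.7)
The strategy mirrors the proofs of Theorems \ref{th1} and \ref{O}: build a sequence of orthogonal ``first passage'' projections detecting that $s_k$ exceeds $\lambda$ for the first time at index $k$, then pair each with a second projection on the tail $s_n - s_k$ so that their coincidence spectrally forces $s_n > \alpha\lambda$. Concretely, I would set $r_k := e_{(-\infty,\lambda]}(s_k)$ for $1 \leq k \leq n$, $p_1 := r_1^{\perp}$, and $p_k := \bigl(\bigwedge_{i=1}^{k-1} r_i\bigr) \wedge r_k^{\perp}$ for $1 < k \leq n$, together with $t_k := e_{-(1-\alpha)\lambda}^{\perp}(s_n - s_k)$, and declare $p := \sum_{k=1}^n p_k$, which is a projection by orthogonality of the $p_k$.

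For the left-hand inequality in \eqref{LO1}, the strong-induction estimate $\tau(p^{\perp}) \leq 1 - 2^{-(k-1)}\tau(r_k^{\perp})$ established in the proof of Theorem \ref{th1} relies only on the de Morgan identity $p_k^{\perp} = \bigvee_{i<k} r_i^{\perp} \vee r_k$ together with Lemma \ref{PR}(i), so it transfers verbatim. Recalling that $r_k^{\perp} = e_{(\lambda,\infty)}(s_k)$, this yields $\tau(p) \geq \max_{1 \leq k \leq n} 2^{-(k-1)}\tau(e_{(\lambda,\infty)}(s_k))$ at once.

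For the right-hand inequality, the spectral crux is the operator estimate $r_k^{\perp} t_k \leq e_{(\alpha\lambda,\infty)}(s_n)$. The hypothesis $s_k s_n = s_n s_k$ forces $s_k$, $s_n$, and $s_n - s_k$ to lie in a common abelian von Neumann subalgebra $W^*(s_k, s_n - s_k)$, where Borel functional calculus on two commuting variables makes the spectral projections commute and translates the real-number implication ``$s_k > \lambda$ and $s_n - s_k \geq -(1-\alpha)\lambda$ imply $s_n > \alpha\lambda$'' into the desired operator inequality, with $r_k^{\perp} \wedge t_k = r_k^{\perp} t_k$ by Lemma \ref{PR}(ii). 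Since $p_k \leq r_k^{\perp}$, sandwiching by $p_k$ and tracing gives $\tau(p_k t_k) \leq \tau(p_k\, e_{(\alpha\lambda,\infty)}(s_n))$. Weak full independence now applies: since $p_k \in W^*(x_1,\ldots,x_k)$ and $t_k \in W^*(x_{k+1},\ldots,x_n)$, the factorization $\tau(p_k t_k) = \tau(p_k)\tau(t_k) \geq \min_j \tau(t_j)\cdot \tau(p_k)$ holds (taking $t_n = \mathbf{1}$ when $k = n$), and summing over $k$ yields $M_\lambda^{-1}\tau(p) \leq \tau(e_{(\alpha\lambda,\infty)}(s_n))$, which is exactly the upper bound in \eqref{LO1}.

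The main obstacle is precisely the spectral reduction $r_k^{\perp} t_k \leq e_{(\alpha\lambda,\infty)}(s_n)$: without the commutativity hypothesis on partial sums, the elementary real-variable implication has no operator analogue, so the proof hinges on carefully restricting to the abelian subalgebra generated by $s_k$ and $s_n - s_k$ before invoking the joint functional calculus. Once that step is secured, everything else is bookkeeping patterned on the Ottaviani proof of Theorem \ref{O}.
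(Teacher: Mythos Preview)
Your proposal is correct and follows essentially the same route as the paper: the same first-passage projections $p_k$ built from $r_k = e_{(-\infty,\lambda]}(s_k)$, the same tail projections $t_k = e_{[-(1-\alpha)\lambda,\infty)}(s_n - s_k)$, the key spectral inequality $r_k^{\perp}t_k \leq e_{(\alpha\lambda,\infty)}(s_n)$ obtained by passing to the abelian algebra $W^*(s_k,s_n)$, and the factorization $\tau(p_kt_k)=\tau(p_k)\tau(t_k)$ via weak full independence. Your treatment is in fact slightly more explicit than the paper's in noting the $k=n$ degeneracy $t_n=\mathbf{1}$ and in invoking the strong-induction lower bound from Theorem~\ref{th1} for the left-hand side, which the paper leaves implicit.
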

\begin{proof}
Set $$r_k := e_{(-\infty, \lambda]}\left( s_k \right) \quad (1 \leq k \leq n)$$ as a sequence of spectral projections, and put
\begin{eqnarray*}
&&p_k := \bigwedge_{i=1}^{k-1} r_{i} \wedge r_k^{\perp};\\
\text{and} \quad
&&t_k := e_{[-(1-\alpha)\lambda ,\infty)}\left(s_n - s_k \right);
\end{eqnarray*}
for each $1 \leq k \leq n$. Note that $t_k r_k = r_k t_k$ and
\begin{eqnarray}\label{LO2}
t_k r_k^{\perp} \leq e_{(\alpha\lambda, \infty)}\left(s_n \right),
\end{eqnarray}
and hence
\begin{eqnarray*}
\sum_{k=1}^n \tau \left( t_k\, p_k \right) \leq \sum_{k=1}^n \tau \left( p_k e_{(\alpha\lambda, \infty)}\left( s_n \right) \right) \leq \tau \left( e_{(\alpha\lambda, \infty)}\left( s_n \right) \right).
\end{eqnarray*}
Putting $p := \sum_{k=1}^n p_k$, inequality \eqref{LO1} can be concluded from
\begin{eqnarray*}
\tau \left( e_{(\alpha\lambda, \infty)}\left( s_n \right) \right) &\geq & \sum_{k=1}^n \tau(p_k t_k) \quad \qquad (\text{by \eqref{LO2}}) \nonumber \\
&=& \sum_{k=1}^n \tau(p_k)\tau(t_k) \quad (\text{by independence}) \nonumber\\
&\geq & \min_{k=1}^n \tau\left(t_k \right)\tau(p).
\end{eqnarray*}
\end{proof}

Finally, we conclude the classical version of L\'{e}vy--Skorohod inequality.

\begin{corollary}
Let $X_1, X_2, \ldots, X_n$ be independent random variables in probability space $(\Omega, \mathfrak{F}, \mathbb{P})$ with partial sums $S_k$. Then, for any $\lambda > 0$ and $0 < \alpha < 1$
\begin{eqnarray*}
\mathbb{P}\left( \max\limits_{1\leq k \leq n}  S_k  > \lambda\right) \min\limits_{1 \leq k \leq n} \mathbb{P}\big( S_n - S_k \geq -(1 - \alpha )\lambda \big) \leq \mathbb{P} \left(S_n > \alpha\lambda \right)
\end{eqnarray*}
\end{corollary}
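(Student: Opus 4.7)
The plan is to specialize the noncommutative L\'{e}vy--Skorohod inequality (the preceding theorem) to the commutative von Neumann algebra $\mathfrak{M} = L^{\infty}(\Omega, \mathfrak{F}, \mathbb{P})$ equipped with the tracial state $\tau(f) = \int_{\Omega} f\, d\mathbb{P}$. In this setting classical independence coincides with weakly full independence, every element commutes with every other (so the hypothesis $s_k s_n = s_n s_k$ is automatic), and each spectral projection $e_{B}(S_k)$ becomes the indicator $\mathbf{1}_{\{S_k \in B\}}$ of an event. The corollary is therefore a direct translation of the theorem through this dictionary, mirroring the derivation of the classical L\'{e}vy inequality from Theorem~\ref{th1} given earlier.

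First I would interpret the projections constructed in the proof of the theorem. Writing $r_k = e_{(-\infty, \lambda]}(S_k) = \mathbf{1}_{\{S_k \leq \lambda\}}$ and using that $\wedge$ reduces to pointwise product in the commutative case, one obtains
$$p_k \;=\; \bigwedge_{i=1}^{k-1} r_{i} \wedge r_k^{\perp} \;=\; \mathbf{1}_{P_k}, \qquad P_k := \{S_1 \leq \lambda,\, \ldots,\, S_{k-1} \leq \lambda,\, S_k > \lambda\},$$
so that $p_k$ is the indicator of the event that $k$ is the \emph{first} index at which the partial sum exceeds $\lambda$. Analogously, $t_k = \mathbf{1}_{\{S_n - S_k \geq -(1-\alpha)\lambda\}}$.

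Next, I would observe that the $P_k$ are pairwise disjoint (which corresponds to the orthogonality of the $p_k$ that was established in the theorem) and that their union is exactly $\{\max_{1 \leq k \leq n} S_k > \lambda\}$. Consequently,
$$p \;=\; \sum_{k=1}^n p_k \;=\; \mathbf{1}_{\{\max_{1 \leq k \leq n} S_k > \lambda\}}, \qquad \text{so} \qquad \tau(p) \;=\; \mathbb{P}\!\left(\max_{1 \leq k \leq n} S_k > \lambda\right).$$

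Finally, substituting these identifications into the right-hand inequality $\tau(p) \leq M_{\lambda}\, \tau(e_{(\alpha\lambda, \infty)}(s_n))$ from the preceding theorem, and unwinding the definition of $M_{\lambda}$ in the classical language, yields
$$\mathbb{P}\!\left(\max_{1 \leq k \leq n} S_k > \lambda\right) \;\leq\; \frac{\mathbb{P}(S_n > \alpha\lambda)}{\min_{1 \leq k \leq n} \mathbb{P}\!\bigl(S_n - S_k \geq -(1-\alpha)\lambda\bigr)}.$$
Clearing the denominator produces the stated inequality. There is no real obstacle here: the only substantive point is verifying that the lattice-theoretic constructions of the noncommutative proof collapse to the set-theoretic first-passage decomposition $\bigsqcup_k P_k = \{\max_k S_k > \lambda\}$ in the commutative case, after which the conclusion is immediate.
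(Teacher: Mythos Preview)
Your proposal is correct and matches the paper's approach: the paper states this corollary without proof, treating it as an immediate specialization of the preceding noncommutative L\'{e}vy--Skorohod theorem, exactly in the spirit of the earlier derivation of Corollary~2.6 from Theorem~\ref{th1}. Your identification of $p$ with $\mathbf{1}_{\{\max_k S_k > \lambda\}}$ via the first-passage decomposition is precisely the commutative translation the paper has in mind.
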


\section{Noncommutative strong and weak symmetrization inequalities}

Recall that a sequence $\left(x_k\right)_{k=1}^n$ in $\mathfrak{M}$ is \textit{tensor independent} \cite[Definition 2.5 and Remarks after that]{D} if
\begin{eqnarray*}
\tau\left(\prod_{i=1}^m\left(\prod_{k=1}^n a_{ki}\right) \right) = \prod_{k=1}^n\tau\left(\prod_{i=1}^m a_{ki}\right),
\end{eqnarray*}
whenever $a_{ki} \in W^*(x_k)$ ~ ($1 \leq i \leq m$; $1 \leq k \leq n$; $m \in \mathbb{N}$).

For any self-adjoint element in $\mathfrak{M}$, one may construct a tensor independent operator such that together with $x$ are identically distributed. To be more precise, consider the von Neumann algebra tensor product $\mathfrak{M}\otimes \mathfrak{M}^{\prime}$ in which $\mathfrak{M}^{\prime} = \mathfrak{M}$. Then any member of $\mathfrak{M}$ and $\mathfrak{M}^{\prime}$ can be regarded as elements of $\mathfrak{M}\otimes \mathfrak{M}^{\prime}$ equipped with the tensor product trace $\overline{\tau}$, which is uniquely determined by $\overline{\tau}(x\otimes x^{\prime})= \tau(x)\tau(x^{\prime})$ via the following maps, respectively:
\begin{eqnarray*}
x \in \mathfrak{M} \mapsto x\otimes 1 \quad \text{and} \quad x \in \mathfrak{M}^{\prime} \mapsto 1\otimes x.
\end{eqnarray*}
For any self-adjoint element $x \in \mathfrak{M}$, we put $\overline{x}=x\otimes1$ and $\overline{x}^{\prime}=1\otimes x$.

Clearly, $\overline{x}$ and $\overline{x}^{\prime}$ have identical moments in $ \mathfrak{M}\otimes \mathfrak{M}^{\prime}$ (i.e. $\overline{\tau}(\overline{x}^{~k})=\overline{\tau}(\overline{x}^{{\prime}^{~k}})$ for every $k\in\mathbb{N}$), so $\overline{x}$ and $\overline{x}^{\prime}$ are identically distributed by \cite[p. 203, Remark]{D}. Furthermore, $\overline{x}$ and $\overline{x}^{\prime}$ are tensor independent with respect to $\overline{\tau}$ by \cite[Theorem 3.1]{D}.\\

{\bf STANDING NOTATION.} To simplify our notation, by passing to $\mathfrak{M} \otimes \mathfrak{M}$ and considering an isometric copy of $\mathfrak{M}$ therein, we denote $\overline{x}$, $\overline{x}^{\prime}$, and $\overline{\tau}$ with $x$, $x^\prime$, and $\tau$, respectively; and set $\widehat{x} :=x-x^\prime$.\\


The spirit of the next proposition is related to symmetrization inequalities, which provide relations between random variables and symmetrized versions. As an application, symmetrization inequalities can be applied to relate moments of random variables to their symmetrized counterparts.

\begin{proposition}[Strong symmetrization inequality]\label{pr2}
Let $x_1, x_2, \ldots, x_n$ be self-adjoint random variables in $\mathfrak{M}$. For any $\lambda$, there exists a projection $p$ such that
\begin{eqnarray}\label{SSI1}
\tau(p) \leq 2\, \tau\left(\bigvee_{i=1}^n e_{\lambda}^{\perp}(\widehat{x_{i}})\right).
\end{eqnarray}
Moreover, the projection $p$ is nonzero provided that $e_{\lambda}^{\perp}(x_i - \med(x_i))$ is nonzero for some $1 \leq i \leq n$.
\end{proposition}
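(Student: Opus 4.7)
The plan is to define the projection $p := \bigvee_{i=1}^n p_i$, where $p_i := e_\lambda^\perp(x_i - m_i)$ and $m_i := \med(x_i)$. This choice automatically satisfies the ``moreover'' clause, since $p \neq 0$ as soon as one of the $p_i$ is nonzero. The task therefore reduces to proving $\tau(p) \leq 2\tau(Q)$ with $Q := \bigvee_{i=1}^n e_\lambda^\perp(\widehat{x_i})$. Classically one uses the disjointification $B_k := \{x_k - m_k \geq \lambda\}\setminus\bigcup_{i<k}\{x_i - m_i \geq \lambda\}$, intersects it with the independent event $\{x'_k \leq m_k\}$ of probability at least $\tfrac{1}{2}$, and observes that on the intersection $\widehat{x_k} \geq \lambda$. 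I will mimic this argument inside $\mathfrak{M} \otimes \mathfrak{M}$ (written simply as $\mathfrak{M}$ per the standing notation).

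First, disjointify. Because $\mathfrak{M}$ is finite (it carries a faithful normal tracial state), for any two projections one has $p\vee q = p + (p^\perp \wedge q)$: the right-hand side is a subprojection of $p\vee q$ of the same trace by Kaplansky's equivalence $p\vee q - p \sim p^\perp \wedge q$, so the two coincide by faithfulness. Iterating, set $\tilde p_1 := p_1$ and $\tilde p_k := (p_1\vee\cdots\vee p_{k-1})^\perp \wedge p_k$ for $1 < k \leq n$. These are mutually orthogonal projections in $\mathfrak{M}$ with $\sum_k \tilde p_k = p$, hence $\tau(p) = \sum_k \tau(\tilde p_k)$.

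Next, the symmetrization. Put $s_k := e_{(-\infty,0]}(x'_k - m_k) \in \mathfrak{M}'$; the definition of median gives $\tau(s_k) \geq \tfrac{1}{2}$. Since $\mathfrak{M}$ and $\mathfrak{M}'$ commute factor-wise, $\tilde p_k s_k$ is a projection. Inside the abelian algebra $W^*(x_k, x'_k)$, the pointwise implication ``$x_k \geq m_k + \lambda$ and $x'_k \leq m_k \Rightarrow \widehat{x_k} \geq \lambda$'' yields $p_k s_k \leq e_\lambda^\perp(\widehat{x_k})$, and then the monotonicity of the meet gives $\tilde p_k s_k \leq p_k s_k \leq e_\lambda^\perp(\widehat{x_k}) \leq Q$. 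Because the $\tilde p_k$'s are mutually orthogonal, so are the subprojections $\tilde p_k s_k$, and therefore $\sum_{k} \tilde p_k s_k$ is itself a projection bounded by $Q$, giving $\sum_k \tau(\tilde p_k s_k) \leq \tau(Q)$.

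Finally, tensor independence gives $\tau(\tilde p_k s_k) = \tau(\tilde p_k)\tau(s_k) \geq \tfrac{1}{2} \tau(\tilde p_k)$, and summing over $k$ produces $\tau(p) = \sum_k \tau(\tilde p_k) \leq 2 \sum_k \tau(\tilde p_k s_k) \leq 2\tau(Q)$, as desired. The step I expect to be most delicate is the disjointification: unlike in the L\'{e}vy setting, the projections $p_i$ do not commute, so their join has to be decomposed using the finiteness of $\mathfrak{M}$, and one has to be careful to keep each $\tilde p_k$ in $\mathfrak{M}$ (rather than merely in $\mathfrak{M}\otimes\mathfrak{M}$), so that the factorisation $\tau(\tilde p_k s_k) = \tau(\tilde p_k)\tau(s_k)$ coming from tensor independence is available.
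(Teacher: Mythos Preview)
Your overall strategy is the same as the paper's, and everything after the disjointification step is correct and matches the paper's argument almost verbatim (your $\tilde p_k$ coincide with the paper's $p_k$, your $s_k$ is the paper's $f_k$, and the comparison $\tilde p_k s_k \le e_\lambda^{\perp}(\widehat{x_k})$ is exactly the paper's \eqref{st2}). The gap is precisely at the step you yourself flagged as delicate.

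The identity $p\vee q = p + (p^{\perp}\wedge q)$ is \emph{false} for noncommuting projections, even in a finite factor. The Kaplansky parallelogram law gives $p\vee q - p \sim q - (p\wedge q)$, not $p\vee q - p \sim p^{\perp}\wedge q$; these two right-hand sides differ already in $\mathbb{M}_2(\mathbb{C})$ when $p,q$ are two distinct non-orthogonal rank-one projections (there $p\vee q - p = p^{\perp}$ has trace $\tfrac12$ while $p^{\perp}\wedge q = 0$). Consequently your claimed equality $\sum_k \tilde p_k = \bigvee_i p_i$ fails, and the bound you actually prove, $\sum_k \tau(\tilde p_k) \le 2\tau(Q)$, does not control $\tau\bigl(\bigvee_i p_i\bigr)$.

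The fix is simply to abandon the join and take $p := \sum_{k=1}^n \tilde p_k$ as the projection in the statement; this is what the paper does. The inequality $\tau(p)\le 2\tau(Q)$ then follows from the argument you gave. For the ``moreover'' clause one can no longer just say $p \ge p_i$; instead argue by induction that $\tilde p_k = 0$ for all $k$ forces $e_\lambda^{\perp}(z_k)=0$ for all $k$: indeed $\tilde p_1 = e_\lambda^{\perp}(z_1)$, and if $e_\lambda^{\perp}(z_j)=0$ for $j<k$ then $\tilde p_k = \bigwedge_{j<k} e_\lambda(z_j)\wedge e_\lambda^{\perp}(z_k) = \mathbf{1}\wedge e_\lambda^{\perp}(z_k) = e_\lambda^{\perp}(z_k)$.
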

\begin{proof}
Put $z_k := x_k - \med(x_k)$ and set
\vspace{-5 mm}
\begin{eqnarray}
&&r_0:=1, \quad r_k := \bigwedge_{j=1}^k e_{\lambda}\left( z_k \right), \label{S1} \\
&&p_k := r_{k-1} \wedge e_{\lambda}^{\perp}\left( z_k \right),\label{S2}\\
\text{and} ~&& q_k := e_{\lambda}^{\perp}\left( \widehat{x_{k}} \right). \nonumber
\end{eqnarray}
Then $ (p_k)_k$ is a sequence of orthogonal projections. Infact, if $1 \leq k < j \leq n$, then $p_j$ is a sub-projection of $e_{\lambda}\left( z_k \right)$ and $p_k$ is a sub-projection of $e_{\lambda}^{\perp}\left( z_k \right)$, and hence $p_j p_k =0$ for all $1 \leq k < j \leq n$.\\
Putting $f_k := e_{(-\infty, 0]}\left( x_k^{\prime} - \med(x_k^{\prime}) \right)$, note that projections $e_{(\lambda, \infty)}\left( z_k \right), f_k$, and $q_k$ commute with each other, because $x$ and $x^{\prime}$ as well as their spectral projections commute. Considernig the abelian von Neumann algebra $W^*\left( x_k, x_k^{\prime} \right)$ and noting that $\med(x_k) = \med(x_k^{\prime})$, we conclude that
\begin{eqnarray}\label{st2}
e_{(\lambda, \infty)}\left( z_k \right)\, f_k = e_{(\lambda, \infty)}\left( z_k \right) \wedge f_k \leq q_k.
\end{eqnarray}
Now, from \eqref{st2} we get
\begin{eqnarray*}
p_k\, f_k\, p_k = p_k\, e_{(\lambda, \infty)}\left( z_k \right) f_k\, p_k \leq p_k\, q_k\, p_k \leq p_k\, \left( \vee_{i=1}^n q_i \right)\, p_k
\end{eqnarray*}
Therefore,
\begin{eqnarray}\label{st3}
\sum_{k=1}^n \tau \left(p_k\, f_k \right) \leq \sum_{k=1}^n \tau \left( p_k\, \left( \vee_{i=1}^n q_i \right) \right).
\end{eqnarray}
As $p_k$'s are orthogonal, setting $p:= \sum_{k=1}^n p_k$, we obtain
\begin{eqnarray*}
\tau \left(\vee_{i=1}^n q_i\right) &\geq & \sum_{k=1}^n \tau \left( p_k\, \left( \bigvee\limits_{i=1}^n q_i \right) \right) \\
&=& \sum_{k=1}^n \tau (p_k\, f_k) \qquad\qquad\quad(\text{by \eqref{st3}}) \\
&=& \sum_{k=1}^n \tau (p_k) \tau (f_k) \qquad\qquad(\text{by the tensor independence})\\
&\geq & \frac{1}{2} \sum_{k=1}^n \tau (p_k)\\
& = & \frac{1}{2} \tau (p),
\end{eqnarray*}
in which the last inequality follows from the definition of median; hence we have proved \eqref{SSI1}.

To investigate the last assertion, suppose that $p=0$; so $p_k = 0$ for all $1 \leq k\leq n$. However, it is easy to see from \eqref{S1}, \eqref{S2}, and induction that  $e_{(\lambda, \infty)}\left( z_k \right) = 0$ for all $1 \leq k \leq n$, which is contrary to our assumptions.
\end{proof}
\begin{corollary}
Let $X_k, k=1, 2, \ldots, n$, be classical random variables, and let $\widehat{X_{k}} = X_k - X_k^{\prime}$, where $X_k^{\prime}$ is an independent copy of $X_k$ for each $k$. Then for each $\lambda \in \mathbb{R}$,
\begin{eqnarray*}
\mathbb{P}\left(\max_{1 \leq k \leq n} (X_k - \med(X_k)) \geq \lambda\right) \leq 2 \mathbb{P}\left(\max_{1\leq k \leq n} (\widehat{X_{k}}) \geq \lambda\right).
\end{eqnarray*}
\end{corollary}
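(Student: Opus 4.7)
The plan is to specialize Proposition \ref{pr2} to the commutative setting. I would take $\mathfrak{M} := L^{\infty}(\Omega, \mathfrak{F}, \mathbb{P})$ with the normal faithful tracial state $\tau(f) = \int_{\Omega} f\, d\mathbb{P}$. Under the standing notation the ambient algebra becomes $\mathfrak{M} \otimes \mathfrak{M} \cong L^{\infty}(\Omega \times \Omega, \mathbb{P} \otimes \mathbb{P})$, and the elements $1 \otimes X_k$ are precisely independent copies of $X_k$ in the classical sense. Thus the operator $\widehat{X_k} = X_k - X_k'$ from the standing notation coincides with the symmetrization appearing in the statement of the corollary, and the hypotheses of Proposition \ref{pr2} are in force.

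Next I would translate the projections from the proof of the proposition into events. In this commutative setting every projection is an indicator function; the spectral projections become $e_{\lambda}(Y) = \chi_{\{Y < \lambda\}}$ and $e_{\lambda}^{\perp}(Y) = \chi_{\{Y \geq \lambda\}}$, and the lattice operations $\wedge$, $\vee$ reduce to intersection and union of events. Setting $z_k := X_k - \med(X_k)$ as in the proof of the proposition, the projections in \eqref{S1}--\eqref{S2} become
\begin{equation*}
r_{k-1} = \chi_{\{z_1 < \lambda, \, \ldots, \, z_{k-1} < \lambda\}}, \qquad p_k = \chi_{\{z_1 < \lambda, \, \ldots, \, z_{k-1} < \lambda, \, z_k \geq \lambda\}},
\end{equation*}
so the events indicated by the $p_k$ form a disjoint partition of $\{\max_{1 \leq k \leq n} z_k \geq \lambda\}$. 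Consequently $\tau(p) = \sum_{k=1}^n \mathbb{P}(\{p_k = 1\}) = \mathbb{P}\bigl(\max_{1 \leq k \leq n} (X_k - \med(X_k)) \geq \lambda\bigr)$.

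On the right-hand side of \eqref{SSI1}, the join $\bigvee_{i=1}^n e_{\lambda}^{\perp}(\widehat{X_i})$ is the indicator of $\bigcup_{i=1}^n \{\widehat{X_i} \geq \lambda\} = \{\max_{1 \leq i \leq n} \widehat{X_i} \geq \lambda\}$, so its trace equals $\mathbb{P}(\max_{i} \widehat{X_i} \geq \lambda)$. Substituting these two identifications into \eqref{SSI1} yields the desired inequality. I do not foresee any real obstacle here: the whole argument is a dictionary translation between commuting spectral projections and indicator functions, the only subtlety being the identification of $X_k'$ with a classically independent copy of $X_k$ under the standing notation, which follows immediately from the multiplicativity of the tensor product trace.
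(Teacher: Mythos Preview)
Your proposal is correct and follows essentially the same route as the paper: both arguments identify the projection $p=\sum_k p_k$ from Proposition~\ref{pr2} with the indicator of $\{\max_k (X_k-\med(X_k))\geq\lambda\}$ and the join $\bigvee_i e_\lambda^\perp(\widehat{X_i})$ with the indicator of $\{\max_i \widehat{X_i}\geq\lambda\}$, then read off the inequality. Your write-up is in fact more careful than the paper's, spelling out the tensor-product identification of $X_k'$ and the commutative dictionary $e_\lambda^\perp(Y)=\chi_{\{Y\geq\lambda\}}$ explicitly.
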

\begin{proof}
It is easy to check that the projections $p$ and $\bigvee_{k=1}^n q_k$ in Proposition \ref{pr2} correspond to the characteristic functions of the subsets $\{\max_{k=1}^n (X_k - \med(X_k)) > \lambda\}$ and $\{\max_{k=1}^n (\widehat{X_{k}}) > \lambda\}$, respectively.
\end{proof}
\begin{corollary}[Weak symmetrization inequality]
Let $x \in \mathfrak{M}$ be a self-adjoint operator. If $m = \med(x)$, then for any $\lambda \in \mathbb{R}$  and $\alpha \in \mathbb{R}$,
\begin{eqnarray*}
&& \tau\left( e_{\lambda}^{\perp} (x - m))\right)~ \leq ~ 2\, \tau \left(e_{\lambda}^{\perp}(\widehat{x})\right)\\
&& \tau \left( e_{\lambda}^{\perp}(|x - m|)\right)~ \leq ~ 2\, \tau \left(e_{\lambda}^{\perp}(|\widehat{x}|)\right) ~ \leq ~ 4\, \tau \left( e_{\frac{\lambda}{2}}^{\perp}(|x - \alpha |)\right).
\end{eqnarray*}
In particular,
\begin{eqnarray*}
\tau \left( e_{\lambda}^{\perp}(|x - m|)\right)~ \leq ~ 2\, \tau \left(e_{\lambda}^{\perp}(|\widehat{x}|)\right) ~ \leq ~ 4\, \tau \left( e_{\frac{\lambda}{2}}^{\perp}(|x - m |)\right).
\end{eqnarray*}
\end{corollary}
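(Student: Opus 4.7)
The three inequalities stack essentially by reusing the strong symmetrization inequality of Proposition \ref{pr2} and a pointwise triangle argument in an abelian subalgebra.

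First I would derive the initial inequality $\tau(e_{\lambda}^{\perp}(x-m)) \le 2\tau(e_{\lambda}^{\perp}(\widehat{x}))$ as the $n=1$ case of Proposition \ref{pr2}. With a single random variable, $r_0 = \mathbf{1}$, so the constructed projection collapses to $p = p_1 = e_{\lambda}^{\perp}(x - \med(x)) = e_{\lambda}^{\perp}(x-m)$, and $\vee_{i=1}^{1} q_i = e_{\lambda}^{\perp}(\widehat{x})$. The conclusion of Proposition \ref{pr2} is then exactly the first claimed inequality.

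Next I would pass to the absolute-value version. The key observation is that $-m$ is a median of $-x$ (this is immediate from the definition, since $E^{-x}((-\infty,-m]) = E^{x}([m,\infty))$ and $E^{-x}([-m,\infty)) = E^{x}((-\infty,m])$), and $\widehat{-x} = -\widehat{x}$ because the construction $y \mapsto y\otimes 1 - 1\otimes y$ is linear. Applying the previously derived inequality to $-x$ gives $\tau(e_{\lambda}^{\perp}(-(x-m))) \le 2\tau(e_{\lambda}^{\perp}(-\widehat{x}))$. For $\lambda > 0$, the spectral projections $e_{\lambda}^{\perp}(y)$ and $e_{\lambda}^{\perp}(-y)$ of a self-adjoint $y$ are orthogonal and sum to $e_{\lambda}^{\perp}(|y|)$; adding the two inequalities therefore yields $\tau(e_{\lambda}^{\perp}(|x-m|)) \le 2\tau(e_{\lambda}^{\perp}(|\widehat{x}|))$.

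For the remaining step $2\tau(e_{\lambda}^{\perp}(|\widehat{x}|)) \le 4\tau(e_{\lambda/2}^{\perp}(|x-\alpha|))$, I would work inside the abelian von Neumann algebra $W^{*}(x, x^{\prime})$, where $x$ and $x^{\prime}$ commute by construction. Applying Gelfand's theorem, the pointwise triangle inequality $|x - x^{\prime}| \le |x - \alpha| + |x^{\prime} - \alpha|$ translates into the projection inequality
\begin{equation*}
e_{\lambda}^{\perp}(|\widehat{x}|) \;\le\; e_{\lambda/2}^{\perp}(|x - \alpha|) \,\vee\, e_{\lambda/2}^{\perp}(|x^{\prime} - \alpha|).
\end{equation*}
Taking $\tau$ and invoking the subadditivity of Lemma \ref{PR}(i) bounds the right-hand side by the sum of the two traces, and because $x$ and $x^{\prime}$ are identically distributed the two summands are equal. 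This produces $\tau(e_{\lambda}^{\perp}(|\widehat{x}|)) \le 2\tau(e_{\lambda/2}^{\perp}(|x-\alpha|))$, and multiplying by $2$ completes the chain. The particular case is then just the specialization $\alpha = m$.

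I do not expect a serious obstacle; the only finicky points are checking that a median of $-x$ can be chosen as $-m$ (so that applying Proposition \ref{pr2} to $-x$ aligns with the same $m$ appearing in the original inequality) and justifying the splitting $e_{\lambda}^{\perp}(|y|) = e_{\lambda}^{\perp}(y) + e_{\lambda}^{\perp}(-y)$, which holds only for $\lambda > 0$ (the $\lambda \le 0$ cases of the statement are trivial since the left-hand sides are at most $1$).
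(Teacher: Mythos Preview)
Your proposal is correct and follows essentially the same route as the paper: the $n=1$ instance of Proposition~\ref{pr2} for $x$ and for $-x$, the identity $e_{\lambda}^{\perp}(|y|)=e_{\lambda}^{\perp}(y)+e_{\lambda}^{\perp}(-y)$, and a triangle-inequality argument inside the abelian algebra $W^{*}(x,x')$ together with the identical distribution of $x$ and $x'$. Your treatment is in fact slightly more careful than the paper's, since you explicitly note that $-m$ serves as a median of $-x$ and that the spectral splitting requires $\lambda>0$ (with the $\lambda\le 0$ case being trivial), points the paper leaves implicit.
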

\begin{proof}
Applying the strong symmetrization inequality with $n=1$ and $x_1 = x$ to get
\begin{eqnarray}\label{We1}
\tau \left( e_{\lambda}^{\perp}(x - m)\right) \leq 2\,  \tau \left(e_{\lambda}^{\perp}(\widehat{x})\right).
\end{eqnarray}
Applying the strong symmetrization inequality with $n=1$ and $x_1 = -x$ gives
\begin{eqnarray}\label{We2}
\tau \left( e_{\lambda}^{\perp}(-x + m)\right) \leq 2\,  \tau \left(e_{\lambda}^{\perp}(-\widehat{x})\right).
\end{eqnarray}
Note that for any self-adjoint operator $y$ it holds that $e_{\lambda}^{\perp}(|y|) = e_{\lambda}^{\perp}(y) + e_{\lambda}^{\perp}(-y)$. Summing \eqref{We1} and \eqref{We2} we get $$\tau \left( e_{\lambda}^{\perp}(|x - m|)\right) \leq 2 \tau \left(e_{\lambda}^{\perp}(|\widehat{x}|)\right).$$
Finally, the right hand side of the second assertion can be obtained from
\begin{eqnarray*}
\tau \left(e_{\lambda}^{\perp}(|\widehat{x}|)\right) &=& \tau \left(e_{\lambda}^{\perp}\left( \big|\, x- \alpha - (x^{\prime} - \alpha)\, \big|\right)\right)\\
&\leq & \tau \left(e_{\frac{\lambda}{2}}^{\perp}(|x - \alpha |)\right) + \tau \left(e_{\frac{\lambda}{2}}^{\perp}(|x^{\prime} - \alpha |)\right)\\
&=& 2\, \tau \left(e_{\frac{\lambda}{2}}^{\perp}(|x - \alpha |)\right).
\end{eqnarray*}
Note that $x$ and $x^{\prime}$ are commutating, and the above inequality easily follows from Borel functional calculus. The last equality comes from the fact that if $x, y \in \mathfrak{M}_{sa}$ are identically distributed, then $f(x)$ and $f(y)$ are identically distributed for every continuous function $f$ on the union of the spectra of $x, y$. We refer the reader to \cite[Page 203, Remark]{D} for more details. 
\end{proof}
The following result can be concluded from the weak symmetrization inequality. Recall that a sequence $(x_n)$ in $\mathfrak{M}$ \textit{converges to $x$ in measure} whenever $\tau\left(e_{\epsilon}^{\perp}(|x_n - x|) \right)$ converges to zero for any $\epsilon >0$. (see \cite{Y} for more details). 

\begin{corollary}
For a sequence $(x_n)$ in $\mathfrak{M}^{sa}$ and a sequence $(\alpha_n)$ of real numbers, if $x_n - \alpha_n \longrightarrow 0$ in measure, then $\widehat{x_n} \longrightarrow 0$ in measure and $\alpha_n - \med(x_n) \longrightarrow 0$. 

In particular, if $x_n \longrightarrow 0$ in measure, then $\med(x_n) \longrightarrow 0$.
\end{corollary}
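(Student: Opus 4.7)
The plan is to deduce the three conclusions in a chain, using the weak symmetrization corollary as the bridge between $x_n-\alpha_n$ and $x_n-\med(x_n)$. The key technical device is the elementary estimate
\[
\tau\bigl(e_{\epsilon}^{\perp}(|a+b|)\bigr) \,\le\, \tau\bigl(e_{\epsilon/2}^{\perp}(|a|)\bigr)+\tau\bigl(e_{\epsilon/2}^{\perp}(|b|)\bigr),
\]
valid whenever $a,b\in\mathfrak{M}^{sa}$ commute; it follows from joint Borel functional calculus (on the commutative spectrum, $|a+b|>\epsilon$ forces $|a|>\epsilon/2$ or $|b|>\epsilon/2$), together with Lemma \ref{PR}(i) applied to the resulting join of commuting spectral projections. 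This will be invoked twice, both times in a commutative sub-$*$-algebra.

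First I would prove $\widehat{x_n}\to 0$ in measure. In the ambient space $\mathfrak{M}\otimes\mathfrak{M}$ (following the STANDING NOTATION), $x_n$ and $x_n'$ lie in complementary tensor factors and therefore commute. Since $x_n'$ is identically distributed to $x_n$, we have
\[
\tau\bigl(e_{\epsilon}^{\perp}(|x_n'-\alpha_n|)\bigr)=\tau\bigl(e_{\epsilon}^{\perp}(|x_n-\alpha_n|)\bigr)\longrightarrow 0.
\]
Writing $\widehat{x_n}=(x_n-\alpha_n)-(x_n'-\alpha_n)$ and applying the commuting triangle estimate above gives $\tau\bigl(e_{\epsilon}^{\perp}(|\widehat{x_n}|)\bigr)\to 0$ for every $\epsilon>0$.

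Next, set $m_n:=\med(x_n)$. The weak symmetrization inequality (previous corollary), applied with the scalar $\alpha_n$, yields
\[
\tau\bigl(e_{\epsilon}^{\perp}(|x_n-m_n|)\bigr)\le 2\,\tau\bigl(e_{\epsilon}^{\perp}(|\widehat{x_n}|)\bigr)\le 4\,\tau\bigl(e_{\epsilon/2}^{\perp}(|x_n-\alpha_n|)\bigr)\longrightarrow 0,
\]
so $x_n-m_n\to 0$ in measure. Now the scalar $c_n:=\alpha_n-m_n$ satisfies $c_n\mathbf{1}=(x_n-m_n)-(x_n-\alpha_n)$; since scalars commute with everything, a second application of the commuting triangle estimate gives $\tau\bigl(e_{\epsilon}^{\perp}(|c_n\mathbf{1}|)\bigr)\to 0$. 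Because $\tau(e_{\epsilon}^{\perp}(|c_n\mathbf{1}|))$ equals $1$ when $|c_n|>\epsilon$ and $0$ otherwise, this forces $c_n\to 0$, i.e.\ $\alpha_n-\med(x_n)\to 0$. The particular case is immediate on taking $\alpha_n\equiv 0$.

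I do not anticipate a genuine obstacle; the only mild subtlety is keeping the identifications in the STANDING NOTATION straight when asserting that $x_n-\alpha_n$ and $x_n'-\alpha_n$ have the same $\tau$-mass on spectral sets and that the two terms commute as elements of $\mathfrak{M}\otimes\mathfrak{M}$. Everything else is a direct combination of the weak symmetrization corollary with the commuting-sum estimate and Lemma \ref{PR}(i).
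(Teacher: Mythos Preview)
Your proposal is correct and follows the route the paper intends: the paper states this corollary without proof, merely remarking that it ``can be concluded from the weak symmetrization inequality,'' and your argument supplies exactly those details. One minor redundancy: your separate Step~1 proof that $\widehat{x_n}\to 0$ via the commuting triangle estimate is unnecessary, since the right-hand inequality of the weak symmetrization corollary (which you quote in Step~2 anyway) already gives $\tau\bigl(e_{\epsilon}^{\perp}(|\widehat{x_n}|)\bigr)\le 2\,\tau\bigl(e_{\epsilon/2}^{\perp}(|x_n-\alpha_n|)\bigr)\to 0$ directly.
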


\begin{corollary}
Let $x \in \mathfrak{M}^{sa}$. Then, for any $\alpha \in \mathbb{R}$ and $p \geq 1$, it holds that
\begin{eqnarray*}
\frac{1}{2} \left\| x - \med(x) \right\|_p^p ~\leq ~ \left\| \widehat{x} \right\|_p^p ~\leq ~ 2K_p \| x - \alpha \|_p^p
\end{eqnarray*}
for some constant $K_p$.
\end{corollary}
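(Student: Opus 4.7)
The plan is to bootstrap the weak symmetrization inequality from tail probabilities (i.e., traces of spectral projections) to $L_p$-norms by integrating against the layer-cake density $p t^{p-1}$. Recall from the preliminaries that for any self-adjoint $y \in L_p(\mathfrak{M})$,
\begin{equation*}
\| y \|_p^p \;=\; \int_0^\infty p\, t^{p-1}\, \tau\!\left( e_t^\perp(|y|) \right)\, dt,
\end{equation*}
so tail bounds on $\tau(e_t^\perp(|\cdot|))$ transfer directly to norm bounds.

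For the left-hand inequality, I would start from the first assertion of the weak symmetrization inequality,
\begin{equation*}
\tau\!\left( e_\lambda^\perp(|x - m|) \right) \;\leq\; 2\, \tau\!\left( e_\lambda^\perp(|\widehat{x}|) \right),
\end{equation*}
multiply both sides by $p\lambda^{p-1}$ and integrate over $\lambda \in (0, \infty)$. Using the layer-cake formula above on each side yields $\| x - \med(x) \|_p^p \leq 2 \| \widehat{x} \|_p^p$, which is precisely the first claimed bound after dividing by $2$.

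For the right-hand inequality, I would start from the chain
\begin{equation*}
\tau\!\left( e_\lambda^\perp(|\widehat{x}|) \right) \;\leq\; 2\, \tau\!\left( e_{\lambda/2}^\perp(|x - \alpha|) \right),
\end{equation*}
again multiply by $p\lambda^{p-1}$ and integrate. The only minor technical step is the change of variables $u = \lambda/2$ on the right, which produces the factor $2^p$ from $(2u)^{p-1} \cdot 2 = 2^p u^{p-1}$; combined with the pre-factor $2$, this gives $\| \widehat{x} \|_p^p \leq 2^{p+2}\, \| x - \alpha \|_p^p$. Writing $2^{p+2} = 2 K_p$ with $K_p := 2^{p+1}$ delivers the stated bound.

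There is no real obstacle here; the argument is essentially routine once the layer-cake formula is in hand. The only thing to check carefully is the bookkeeping in the substitution $u = \lambda/2$, which is where the explicit constant $K_p$ comes from; everything else is a direct integration of the pointwise (in $\lambda$) weak symmetrization inequalities established in the preceding corollary.
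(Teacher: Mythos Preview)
Your argument for the left-hand inequality is exactly the paper's: integrate the weak symmetrization tail bound against $p\lambda^{p-1}$ and invoke the layer-cake formula.

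For the right-hand inequality your route differs from the paper's. The paper bypasses tail estimates entirely and works directly with norms: writing $\widehat{x} = (x-\alpha) - (x'-\alpha)$ and applying the noncommutative Clarkson-type inequality $\|a-b\|_p^p \le K_p(\|a\|_p^p + \|b\|_p^p)$, together with the fact that $x$ and $x'$ are identically distributed, gives $\|\widehat{x}\|_p^p \le 2K_p\|x-\alpha\|_p^p$ in one line. Your approach instead integrates the second weak symmetrization bound $\tau(e_\lambda^\perp(|\widehat{x}|)) \le 2\,\tau(e_{\lambda/2}^\perp(|x-\alpha|))$ and performs the substitution $u=\lambda/2$. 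This is perfectly valid and has the virtue of being self-contained (no appeal to Clarkson), at the cost of a slightly larger explicit constant. Note, though, a small arithmetic slip: $(2u)^{p-1}\cdot 2 = 2^p u^{p-1}$, and combined with the pre-factor $2$ this yields $2^{p+1}$, not $2^{p+2}$; so your $K_p$ should be $2^p$. Since the statement only asks for \emph{some} constant $K_p$, this does not affect correctness.
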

\begin{proof}
\begin{eqnarray*}
\left\| x - x^{\prime} \right\|_p^p &=& \left\| (x -\alpha) - (x^{\prime} - \alpha) \right\|_p^p\\
&\leq& K_p (\left\| x -\alpha \right\|_p^p + \left\| x^{\prime} - \alpha \right\|_p^p)\\
&& \qquad\quad\quad\qquad (\text{by the noncommutative Clarkson inequality \cite{FK}})\\
&=& 2K_p \left\| x -\alpha \right\|_p^p = 2K_p \| x -\alpha \|_p^p.
\end{eqnarray*}
Thanks to the weak symmetrization inequality, the left hand side follows via
\begin{eqnarray*}
\| x - \med(x) \|_p^p &=& \left\| x - \med(x) \right\|_p^p = \int_{0}^{\infty} pt^{p-1}\, \overline{\tau} \left( e_{[t, \infty)}(\left| x - \med(x)\right|)\right) dt\\
&\leq & 2\, \int_{0}^{\infty} pt^{p-1}\, \tau \left(e_{[t, \infty)}(|\widehat{x}|)\right)dt = 2\, \left\| \widehat{x} \right\|_p^p.
\end{eqnarray*}
\end{proof}

\textbf{Acknowledgement.} The authors would like to sincerely thank the referee for carefully reading the paper and for giving some helpful comments improving it. The third author was supported by a postdoctoral grant from Iran's National Elites Foundation (INEF) under the supervision of the third author.


\begin{thebibliography}{99}

\bibitem{BEK} T. N. Bekjan and Z. Chen, \textit{Interpolation and $\Phi$-moment inequalities of noncommutative martingales}, Probab. Theory and Related Fields \textbf{152} (2012), no. 1-2, 179--206.

\bibitem{BS} M. B\.ozejko and R. Speicher. \textit{$\psi$-independent and symmetrized white noises}, Quantum probability \& related topics, 219--236, QP-PQ, VI, World Sci. Publ., River Edge, NJ, 1991.

\bibitem{BK} L. G. Brown and H. Kosaki, \textit{Jensen's inequality in semi-finite von Neumann algebras}, J. Operator Theory  \textbf{23} (1990), no. 1, 3--19.

\bibitem{CF} V. Crismale and F. Fidaleo. \textit{Exchangeable stochastic processes and symmetric states in quantum probability}, Ann. Mat. Pura Appl. (4) \textbf{194} (2015), no. 4, 969--993.

\bibitem{D} S. Dirksen, \textit{Noncommutative stochastic integration through decoupling}, J. Math. Anal. Appl. \textbf{370} (2010), no. 1, 200--223.

\bibitem{FK} T. Fack and H. Kosaki, \textit{Generalized $s$-numbers of $\tau$-measurable operators}, Pacific J. Math. \text{123} (1986), no. 2, 269--300.

\bibitem{Fr1} U. Franz, \textit{Monotone independence is associative}, Infin. Dimens. Anal. Quantum Probab. Relat. Top. \textbf{4} (2001), no. 3, 401--407.

\bibitem{Fr3} U. Franz, \textit{Multiplicative monotone convolutions}, In: Quantum probability. Vol. 73. Banach Center Publ. Polish Acad. Sci., Warsaw, 2006, pp. 153--166. 

\bibitem{Jiao} Y. Jiao, A. Osekowski, and L. Wu, \textit{Noncommutative good-$\lambda$ inequality}, arXiv:1802.07057v1.

\bibitem{SUK} Y. Jiao, F. Sukochev, and D. Zanin, \textit{Sums of independent and freely independent identically distributed random variables}, Studia Math. \textbf{251} (2020), no. 3, 289--315.

\bibitem{JX2} M. Junge and Q. Xu, \textit{Noncommutative Burkholder/Rosenthal inequalities II: Applications}, Israel J. Math. \textbf{167} (2008), 227--282.

\bibitem{L} P. L\'{e}vy, \textit{Th\'eorie de l'addition des Variables al\'eatoires}, (French), Paris: Gauthier-Villars, 1954.

\bibitem{LR} D. Li, Y. Qi, and A. Rosalsky, \textit{A characterization of a new type of strong law of large numbers}, Trans. Amer. Math. Soc. \textbf{368} (2016), no. 1, 539--561.

\bibitem{Liu} W. Liu, \textit{A noncommutative definetti theorem for Boolean independence}, J. Funct. Anal. \textbf{269} (2015), 1950--1994.

\bibitem{Lu} A. Luczak, \textit{Laws of large numbers in von Neumann algebras and related results}, Studia Math. \textbf{81} (1985), no. 3, 231--243.

\bibitem{Mur} N. Muraki, \textit{Monotonic independence, monotonic central limit theorem and monotonic law of large numbers}, Inf. Dim. Anal. Quant. Probab. Rel. Topics, \textbf{4} (2001), 39--58.

\bibitem{N} E. Nelson, \textit{Notes on non-commutative integration}, J. Funct. Anal. \textbf{15} (1974), 103--116.

\bibitem{PIN} I. Pinelis, \textit{Exact Rosenthal-type bounds}, Ann. Probab. \textbf{43} (2015), no. 5, 2511--2544.

\bibitem{R} N. Randrianantoanina, \textit{Conditioned square functions for noncommutative martingales}, Ann. Probab. \textbf{35} (2007), no. 3, 1039--1070.

\bibitem{SAK} A. I. Sakhanenko, \textit{L\'{e}vy-Kolmogorov inequalities for random variables with values in a Banach space}, (Russian) Teor. Veroyatnost. i Primenen. \textbf{29} (1984), no. 4, 793--799.

\bibitem{SZE} Z. S. Szewczak, \textit{On the maximal L\'{e}vy--Ottaviani inequality for sums of independent and dependent random vectors}, Bull. Pol. Acad. Sci. Math. \textbf{61} (2013), no. 2, 155--160.

\bibitem{TMS1} A. Talebi, M. S. Moslehian, and Gh. Sadeghi, \textit{Etemadi and Kolmogorov inequalities in noncommutative probability spaces}, Michigan. Math. J.  \textbf{68} (2019), no. 1, 57--69.

\bibitem{TMS2} A. Talebi, M. S. Moslehian, and Gh. Sadeghi, \textit{Noncommutative Blackwell--Ross martingale inequality}, Infin. Dimens. Anal. Quantum Probab. Relat. Top. \textbf{21} (2018), no. 1, 1850005, 9 pp.

\bibitem{VDN} D. V. Voiculescu, K. J. Dykema, and A. Nica, \textit{Free random variables}, volume 1 of CRM Monograph Series, American Mathematical Society, Providence, RI, 1992.

\bibitem{Y} F. J. Yeadon, \textit{Non-commutative $L_{p}$-spaces}, Math. Proc. Cambridge Philos. Soc. \textbf{77} (1975), 91--102.

\end{thebibliography}
\end{document}